\newtheorem{thm}{Theorem}[section]
\newtheorem{cor}[thm]{Corollary}
\newtheorem{lem}[thm]{Lemma}
\newtheorem{prop}[thm]{Proposition}
\newtheorem{exam}[thm]{Example}
\theoremstyle{definition}
\newtheorem{defn}[thm]{Definition}
\newtheorem{rem}[thm]{Remark}
\newtheorem{que}[thm]{Question}
\numberwithin{equation}{section}
\begin{document}
\title{ The order of the product of two elements in the periodic
 groups }
\author{M. Amiri, I. Lima\\
}
\footnotetext{E-mail Address: {\tt mohsen@ufam.edu.br;\, igor.matematico@gmail.com } }
\date{}
\maketitle

%------------------------------------------------------------------------------------------------------------------------------
\begin{quote}
{\small \hfill{\rule{13.3cm}{.1mm}\hskip2cm} \textbf{Abstract.}
   Let $G$ be a periodic  group, and let 
$LCM(G)$ be the set of all $x\in G$ such that
$o(x^nz)$ divides the least common   multiple of $o(x^n)$ and $o(z)$ for all $z$ in $G$ and  all integers $n$.  In this paper, we prove that  the subgroup generated by 
$LCM(G)$ is a  locally nilpotent characteristic subgroup of $G$ whenever $G$ is a locally finite group. For $x,y\in G$ the vertex  $x$ is connected to vertex $y$ whenever $o(xy)$ divides the least common   multiple of $o(x)$ and $o(y)$.  Let $Deg(G)$  be the sum of all $deg(g)$ where $g$ runs over $G$.   We prove that  for any finite group $G$ with $h(G)$ conjugacy classes,    $Deg(G)=|G|(h(G)+1)$ if and only if  $G$ is an abelian group.\\[.2cm]
%\vspace{1mm} {\renewcommand{\baselinestretch}{1}
%\parskip = 10 mm
{
\noindent{\small {\it \bf 2010 MSC}\,: 20B05, 20D15.}}\\
\noindent{\small {\it \bf Keywords}\,: Finite group, order elements, Nilpotent group.}}\\
\vspace{-3mm}\hfill{\rule{13.3cm}{.1mm}\hskip2cm}
\end{quote}
%________________________________________________________________________________________________________________________________________
\section{Introduction}
One of the oldest problem in group theory is, given two elements $x,y$ in a group $G$, of orders
$m$ and $n$, respectively, to find information on the order of the product $xy$. 
Understanding even
the easier problem of when $ab$ has finite order would have great implications in group theory,
for instance in the study of finitely-generated groups in which the generators have finite order. Burnsid’s problem is a good
 example of a difficult problem which asks whether
a finitely generated periodic group is necessarily finite. A negative answer to this problem  has been
provided in 1964 by Golod and Shafarevich \cite{Gol}, \cite{Sha}, although many variants of this question
still remain unsolved to this day. For more information on this subject, see  Kostrikin \cite{Kos}, Novikov and Adian \cite{Nov}, Ivanov and Ol'shanski\u{i}
\cite{12}, \cite{13}, \cite{23}, Zelmanov \cite{26}, \cite{27} and Lys\"{e}nok \cite{21}.
 Let $G$ be a periodic group, and let  $H$ and $R$ be two non-empty subsets of $G$. Let
$LCM(H,R)$ be the set of all $x\in H$ such that  and 
$o(x^nz)$ divides the least common   multiple  $lcm(o(x^n),o(z))$ for all $z$ in $R$ and all integers $n$. In particular, we denote the set $LCM(G,G)$ by $LCM(G)$ and the subgroup $\langle LCM(G)\rangle$ by $LC(G)$. In this paper, we prove that  the subgroup generated by 
$LCM(G)$ is a    nilpotent characteristic subgroup of $G$   whenever $G$ is a   finite group.  As consequence  of this result, we   prove  the following theorem:

\begin{thm}   Let $G$  be a locally finite  periodic group.
Then 
 $  LC(G)$ is a locally nilpotent subgroup of $G$.
\end{thm}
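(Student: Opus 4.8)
The plan is to reduce the locally finite case to the finite case already established, exploiting the fact that membership in $LCM(G)$ is inherited by subgroups. Since \emph{locally nilpotent} means that every finitely generated subgroup is nilpotent, I would start with an arbitrary finitely generated subgroup $H \le LC(G)$ and aim to exhibit it inside a finite subgroup to which the finite theorem applies.

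First I would record the key monotonicity property: if $K \le G$ and $x \in LCM(G) \cap K$, then $x \in LCM(K)$. Indeed, the defining condition ``$o(x^n z)$ divides $\mathrm{lcm}(o(x^n),o(z))$ for all $z$ and all integers $n$'' is a universal statement over $z$; restricting the range of $z$ from $G$ to $K$ only weakens the requirement, and the order of an element is computed intrinsically, so the condition survives verbatim. Hence $LCM(G) \cap K \subseteq LCM(K)$.

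Next, given $H = \langle g_1, \ldots, g_k \rangle \le LC(G) = \langle LCM(G)\rangle$, each generator $g_i$ is a word in finitely many elements of $LCM(G)$; collecting all of them produces $a_1, \ldots, a_m \in LCM(G)$ with $H \le K := \langle a_1, \ldots, a_m\rangle$ (the latter subgroup contains the $g_i$ together with all inverses). Because $G$ is locally finite, $K$ is finite. By the monotonicity property each $a_j$ lies in $LCM(K)$, so $K = \langle a_1,\ldots,a_m\rangle \le \langle LCM(K)\rangle = LC(K) \le K$, which forces $K = LC(K)$.

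Finally I would invoke the finite result: since $K$ is finite, $LC(K)$ is nilpotent, so $K = LC(K)$ is nilpotent and therefore its subgroup $H$ is nilpotent as well. As $H$ was an arbitrary finitely generated subgroup of $LC(G)$, this shows $LC(G)$ is locally nilpotent. The only genuinely delicate point is the passage from the generators of $H$, which lie in $LC(G)$ and are thus products of $LCM(G)$-elements, to a finite subgroup that coincides with its own $LC$; everything else is bookkeeping resting on the monotonicity observation and local finiteness, so I expect no serious obstacle, the crux being merely to phrase the inheritance of the $LCM$-condition correctly.
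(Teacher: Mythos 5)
Your proposal is correct and takes essentially the same approach as the paper: both arguments reduce to a finite subgroup generated by finitely many elements of $LCM(G)$ via local finiteness, and then invoke the finite-group result that $LC$ of a finite group is nilpotent. Your explicit monotonicity observation, $LCM(G)\cap K\subseteq LCM(K)$ for $K\le G$, is exactly the inheritance step the paper uses implicitly when it applies its corollary to the finite subgroup, so your write-up is if anything slightly more careful than the paper's own.
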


Let $G$ be a  group.
Define $LC_1=LC$ and for $i=2,3,...$, define
$LC(\frac{G}{LC_{i-1}(G)})=\frac{LC_{i}(G)}{LC_{i-1}(G)}$.
We say the group $G$ is a $LCM$-nilpotent group, whenever 
there exists a finite $LCM$-series 
\begin{equation}\label{eq11}
LC_1(G)\leq LC_2(G)\leq...\leq LC_k(G)=G
\end{equation}
such that $\frac{LC_i(G)}{LC_{i-1}(G)}$ is a nilpotent group for all $i=2,3,...,k$. In this case the $LCM$-series (\ref{eq11}), is call a nilpotent $LCM$-series for $G$.
As application of this definition, we prove the following proposition:
\begin{prop}
Let $G$ be a finite  $LCM$-nilpotent group of class $t$, and let $H=P_1\times ....\times P_k$ where $P_i\in Syl_{p_i}(G)$ and $\pi(|G|)=\{p_1,...,p_k\}$.
If $LC(\frac{G}{LC_{i-1}(G)})=LCM(\frac{G}{LC_{i-1}(G)})$ for all $i=2,...,t$, then there exists a bijection $f$ from $G$ to $H$ such that
$o(x) \mid o(f(x))$ for all $x\in G$.
\end{prop}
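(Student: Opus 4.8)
The plan is to induct on the $LCM$-class $t$, reducing modulo $LC_1(G)$ and then lifting the resulting bijection across this nilpotent kernel. Note first that, since $P_i\in\mathrm{Syl}_{p_i}(G)$ and $\pi(|G|)=\{p_1,\dots,p_k\}$, we have $|H|=\prod_{i}|P_i|=|G|$, so a bijection is numerically feasible and it suffices to control orders. If $t=1$ then $G=LC_1(G)=LC(G)$ is nilpotent (by the finite case quoted in the Introduction), hence $G=P_1\times\cdots\times P_k=H$ and the identity satisfies $o(x)=o(f(x))$. So assume $t\ge 2$.

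For the inductive step put $N=LC_1(G)=\langle LCM(G)\rangle$; by the quoted result $N$ is nilpotent, so $N=Q_1\times\cdots\times Q_k$ with $Q_i=P_i\cap N\in\mathrm{Syl}_{p_i}(N)$, and each $Q_i$, being the unique Sylow $p_i$-subgroup of $N$, is characteristic in $N$ and hence normal in $G$. Set $\bar G=G/N$. From $LC_j(\bar G)=LC_{j+1}(G)/N$ and $\bar G/LC_{j-1}(\bar G)\cong G/LC_j(G)$ one sees that $\bar G$ is $LCM$-nilpotent of class $t-1$ and, crucially, that the hypothesis at $i=2$ gives $LCM(\bar G)=LC(\bar G)$, so $\bar G$ again satisfies all the standing hypotheses. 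The induction hypothesis therefore yields a bijection $\bar f\colon\bar G\to\bar H=\bar P_1\times\cdots\times\bar P_k$, with $\bar P_i=P_iN/N\cong P_i/Q_i\in\mathrm{Syl}_{p_i}(\bar G)$, satisfying $o(\bar x)\mid o(\bar f(\bar x))$.

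Now let $\rho\colon H\to\bar H$ be the projection $\rho(h_1,\dots,h_k)=(h_1Q_1,\dots,h_kQ_k)$, whose fibres are the sets $F_{\bar h}=\prod_i(\tilde h_iQ_i)$ of size $\prod_i|Q_i|=|N|$. The cosets of $N$ in $G$ (indexed by $\bar G$) and the fibres of $\rho$ (indexed by $\bar H$) are two partitions into blocks of size $|N|$, matched through $\bar f$. It therefore suffices to construct, for each coset $C=xN$ and the fibre $F=F_{\bar f(\bar x)}$, a bijection $\phi_C\colon C\to F$ with $o(y)\mid o(\phi_C(y))$; the $\phi_C$ then assemble into a bijection $f\colon G\to H$ with the required property. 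By Hall's marriage theorem (poset form), such a $\phi_C$ exists precisely when
\[
\#\{\,y\in C:\ o(y)\in V\,\}\ \le\ \#\{\,h\in F:\ o(h)\in V\,\}
\]
for every up-set $V$ of the divisibility lattice. The target count is governed by a product distribution, because an element of $F=\prod_i(\tilde h_iQ_i)$ has order $\prod_i o(c_i)$ with the $p_i$-parts varying independently over the cosets $\tilde h_iQ_i\subseteq P_i$.

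The main obstacle is verifying this local domination. A first, essentially free, ingredient is the per-prime bound: for $y\in xN$ the $p_i$-part $y_{(p_i)}$ reduces modulo $N$ to the $p_i$-part of $\bar x$, and $\langle y_{(p_i)}\rangle\cap N\subseteq Q_i$, giving $v_{p_i}(o(y))\le v_{p_i}(o(\bar x))+v_{p_i}(\exp Q_i)$; combined with $o(\bar x)\mid o(\bar f(\bar x))$ this bounds each coordinate of the order-vector of $y$ by what the fibre can supply. However, such marginal bounds are by themselves insufficient for Hall's condition: on a product of chains a distribution concentrated on the diagonal can beat a product distribution on the top up-set, so one must control the joint behaviour of the $p_i$-parts across the coset. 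This is exactly where the $LCM$ structure enters. The defining inequality $o(u^nz)\mid\mathrm{lcm}(o(u^n),o(z))$, valid for every $u\in LCM(G)$ and closed under passing to primary parts, forces the orders within a coset to combine by least common multiples rather than to accumulate, decoupling the different primes and so preventing the positive correlation that would violate the displayed inequality. For the inner quotients the hypothesis $LCM(\cdot)=LC(\cdot)$ makes every element of the relevant nilpotent kernel an $LCM$-element, so this decoupling is available uniformly; the single delicate point is the outermost kernel $N=\langle LCM(G)\rangle$, where one has the property only for a generating set and must propagate it through products $o(x\,u_1\cdots u_r)\mid\mathrm{lcm}(o(x),o(u_1),\dots,o(u_r))$. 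Turning this decoupling into the full up-set inequality for every $V$ is the technical heart of the argument.
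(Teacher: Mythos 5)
Your proposal is not a proof but a plan whose decisive step is left unproven, and you say so yourself: after setting up Hall's marriage theorem you write that turning the ``decoupling'' into the up-set inequality for every $V$ is ``the technical heart of the argument'' --- and then you stop. Concretely, once you quotient by $N=LC_1(G)$ and match cosets $xN$ with fibres $F=\prod_i \tilde h_iQ_i$, everything hinges on the domination
$\#\{y\in xN:\ o(y)\in V\}\le \#\{h\in F:\ o(h)\in V\}$
for every up-set $V$ of the divisibility lattice. You prove only per-prime (marginal) bounds, correctly observe that marginal bounds cannot imply this joint inequality, and then appeal to an unspecified decoupling ``forced'' by the $LCM$ property without stating or proving any lemma that delivers it. Worse, you yourself identify the fatal case: for $G$ itself the hypothesis $LC=LCM$ is only assumed for the quotients $G/LC_{i-1}(G)$ with $i\ge 2$, so the outermost kernel $N=\langle LCM(G)\rangle$ need not consist of $LCM$-elements, and your decoupling heuristic is unavailable exactly where you need it. In addition, your claim that the fibre ``can supply'' the required orders is unjustified: a coset $\tilde h_iQ_i$ in the $p_i$-group $P_i$ need not contain elements of order $o(\tilde h_i)\cdot\exp(Q_i)$, so even the target side of the inequality is not controlled.

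The paper's proof avoids this multi-prime correlation problem entirely by a different induction: on $|G|$, peeling off a \emph{minimal} normal subgroup $N\le LC_1(G)$ rather than the whole kernel. Since $G$ is solvable, such an $N$ is elementary abelian for a single prime $p_1$, so within one coset $xN$ the order can only vary in the $p_1$-component. Writing $m=o(x)$, $c=\gcd(m,|P_1|)$ and computing $(xh)^{m/c}=h^{x^{-1}}h^{x^{-2}}\cdots h^{x^{-m/c}}x^{m/c}\in N\langle x^{m/c}\rangle$, a short case analysis ($c>p$; $c=p$ with $h\in\langle x\rangle$ or not) shows $o(xh)$ equals $o(x)$ or divides the order of the matched element, so the coset-to-fibre bijection exists directly, with no counting or marriage-theorem argument at all. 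If you want to salvage your approach, that is the repair: do not quotient by all of $LC_1(G)$ at once, but descend one minimal normal subgroup at a time, so each lifting step involves a single prime and the joint/marginal distinction you could not resolve never arises.
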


Let $G$ be a periodic group.
The $LCM$-graph $ \Gamma(G)$ is an  undirected  graph constructed as follows:

(i) Each element $g$ of $G$ is assigned a vertex.

(ii) For $x,y\in G$, the vertex  $x$ is connected to vertex $y$ whenever $o(xy)\mid lcm(o(x),o(y))$.

The degree (or valency) of a vertex of a graph is the number of edges that are incident to the vertex, and in a multigraph, loops are counted twice.
Let $H$ be a subgrup of $G$ and $g\in G$. The degree of a vertex $v$ in $H$ is denoted $deg_H(v)$ and whenever $G=H$, for simplicity, we denote $deg_G(g)$ by  $deg(g)$.   We denote by $Deg(G)$ to be the number $\sum_{g\in G}deg(g)$.
In the last section we find some application for $LC(G)$ in this special graph. Precisely, we prove the following theorems:

\begin{thm}
Let $G$ be a group of order  $n=p_1p_2\ldots p_k$ where $p_1<\ldots <p_k$ are primes. 
Let $p_i$ be the smallest prime divisor of $n$ such that $p_i\mid p_j-1$ for some prime divisor $p_j$ of $n$.
Then  $$Deg(G)\leq Deg(C_{n/p_rp_i}\times (C_{p_r}\rtimes C_{p_i}))$$
where $p_r=min\{p_j\mid n:  p_i\mid p_j-1\}$.
\end{thm}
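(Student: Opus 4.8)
The plan is to recast the statement as a minimisation problem for the number of non-adjacent pairs. Since every vertex carries exactly one loop (because $o(g^2)\mid o(g)$ for all $g$), the handshake identity gives $Deg(G)=2n+2E(G)$, where $E(G)$ is the number of edges joining \emph{distinct} vertices; equivalently $Deg(G)=n(n+1)-2N(G)$, where $N(G)$ counts the unordered pairs $\{x,y\}$ of distinct elements with $o(xy)\nmid\mathrm{lcm}(o(x),o(y))$. Thus the displayed inequality is equivalent to the assertion that the claimed group \emph{minimises} $N$ over all groups of order $n$. A first reduction is that commuting elements are always adjacent: if $xy=yx$ then $(xy)^{\mathrm{lcm}(o(x),o(y))}=1$, so $N(G)$ only counts non-commuting pairs whose product acquires a prime not already dividing $\mathrm{lcm}(o(x),o(y))$.

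Next I would invoke the structure of groups of squarefree order: every such $G$ is a metacyclic $Z$-group, $G\cong C_m\rtimes C_k$ with $mk=n$ and $\gcd(m,k)=1$, where $C_k$ acts on $C_m=\prod_{q\mid m}C_q$ through homomorphisms $C_k\to\mathrm{Aut}(C_q)\cong C_{q-1}$. Consequently a prime may act or be acted upon but not both, and the isomorphism type is encoded by which acting prime $p\mid k$ acts nontrivially on which $q\mid m$ (forcing $p\mid q-1$). Since all element orders are squarefree, I would classify pairs by $o(x),o(y)$ and show that the only non-edges arise from pairs of $p$-elements in distinct Sylow $p$-subgroups whose product lands in the acted-upon part.

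A key simplification is a direct-factor reduction: if $G=A\times F$ with $\gcd(|A|,|F|)=1$, then orders multiply across factors and the adjacency condition decouples, so $(a_1,f_1)\sim(a_2,f_2)$ in $G$ if and only if $f_1\sim f_2$ in $F$; hence $N(A\times F)=|A|^2\,N(F)$. Applying this with $F=C_q\rtimes C_p$ a non-abelian Frobenius group, a direct count (each of the $q(p-1)$ elements of order $p$ has exactly $q-1$ partners of order $p$ in other Sylow $p$-subgroups whose product is a nontrivial element of $C_q$) gives $N(C_q\rtimes C_p)=\tfrac12 q(p-1)(q-1)$, and therefore for the model groups $C_{n/pq}\times(C_q\rtimes C_p)$ one obtains $N=\frac{n^2}{2}\cdot\frac{(p-1)(q-1)}{p^2q}$.

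The decisive and hardest step is the optimisation, and I expect it to be the main obstacle. One must (i) show that among all admissible $Z$-group structures a single nontrivial Frobenius action is optimal, i.e. that adding a second acting pair, or letting one prime act on several, only increases $N$; and (ii) minimise $\frac{(p-1)(q-1)}{p^2q}$ over all admissible pairs $(p,q)$ with $p\mid q-1$ and $pq\mid n$. For fixed acting prime $p$ the factor $\frac{q-1}{q}$ is increasing in $q$, which forces $q=p_r$, the smallest admissible target, matching the statement. However, identifying the optimal acting prime as $p_i$ is the genuinely delicate point, because $p\mapsto\frac{p-1}{p^2}$ is itself \emph{decreasing} while the smallest admissible target $p_r$ grows with $p$, so the two effects compete. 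I therefore expect the crux to be a careful comparison — presumably a case analysis on $\pi(|G|)$ and on the divisibility graph of the primes — verifying that $(p_i,p_r)$ dominates every competing admissible pair; this monotonicity claim is the step most in need of a complete and rigorous justification.
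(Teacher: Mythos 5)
Your framework is sound, and it is essentially the paper's own argument in complementary form: your identity $Deg(G)=n(n+1)-2N(G)$ is the paper's normalization, your coprime direct-factor decoupling $N(A\times F)=|A|^{2}N(F)$ is exactly Lemma \ref{ineq}/Corollary \ref{ineq2}, and your count $N(C_q\rtimes C_p)=\tfrac{1}{2}q(p-1)(q-1)$ agrees with the degree computation inside Lemma \ref{square}. The difference is that the paper believes it has closed the optimisation step you left open (it reduces, via Corollary \ref{ineq2}, to Lemma \ref{square}(b)), whereas you stop and flag the comparison between different acting primes as the unproved crux.

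That flag is exactly right, and the gap cannot be closed: the statement is false, and your own formulas produce counterexamples. Take $n=1155=3\cdot 5\cdot 7\cdot 11$, so $p_i=3$ and $p_r=7$, and the theorem's extremal group is $M=C_{55}\times(C_7\rtimes C_3)$ with $N(M)=55^{2}\cdot\tfrac{1}{2}\cdot 7\cdot 2\cdot 6=127050$. But $G=C_{21}\times(C_{11}\rtimes C_5)$ is a group of order $1155$ (legitimate since $5\mid 11-1$) with $N(G)=21^{2}\cdot\tfrac{1}{2}\cdot 11\cdot 4\cdot 10=97020<N(M)$, hence $Deg(G)=1141140>1081080=Deg(M)$. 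In your notation, $\frac{(p-1)(q-1)}{p^{2}q}$ equals $\frac{8}{55}$ at $(5,11)$ versus $\frac{4}{21}$ at $(3,7)$: the competition you identified between the decreasing factor $\frac{p-1}{p^{2}}$ and the growing target $p_r$ resolves here in favour of the \emph{larger} acting prime, against the theorem. The paper's proof does not rescue this: it invokes Lemma \ref{square}(b), which presupposes that the acted-upon prime is $p_2$, the second-smallest prime divisor (already inapplicable for $n=1155$, where $3\nmid 5-1$), and whose proof never compares models with distinct acting primes; indeed Lemma \ref{square}(b) itself fails by the same computation, e.g.\ for $n=903=3\cdot 7\cdot 43$ one gets $N\bigl(C_3\times(C_{43}\rtimes C_7)\bigr)=9\cdot 5418=48762<77658=43^{2}\cdot 42=N\bigl(C_{43}\times(C_7\rtimes C_3)\bigr)$. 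So the honest conclusion of your approach is not a proof of the theorem but its refutation; any corrected statement must select the admissible pair $(p,q)$ with $p\mid q-1$, $pq\mid n$, minimising $\frac{(p-1)(q-1)}{p^{2}q}$, which need not be $(p_i,p_r)$, and must additionally justify your step (i) that a single Frobenius action is optimal.
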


\begin{thm}
Let $G$ be a finite group with $h(G)$  conjugacy classes. Then  $Deg(G)=|G|(h(G)+1)$ if and only if  $G$ is an abelian group.
\end{thm}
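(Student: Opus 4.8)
The plan is to translate the degree condition into a count of ordered pairs and then reduce the whole characterization to one structural lemma. First I would record that every vertex carries a loop, since $o(x\cdot x)=o(x^{2})$ always divides $o(x)=lcm(o(x),o(x))$; counting that loop twice gives $deg(x)=f(x)+1$, where $f(x)=|\{y\in G:\ o(xy)\mid lcm(o(x),o(y))\}|$ counts every $y$ joined to $x$ (including $y=x$). Summing over $x$ yields $Deg(G)=|G|+P$, where $P=|\{(x,y)\in G\times G:\ o(xy)\mid lcm(o(x),o(y))\}|$ is the number of ordered LCM-pairs. Grouping elements by conjugacy class gives $\sum_{x\in G}|C_{G}(x)|=|G|\,h(G)$, so $|G|h(G)$ is exactly the number of ordered commuting pairs. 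Hence the target identity $Deg(G)=|G|(h(G)+1)$ is equivalent to $P=|G|h(G)$.

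Next I would note that every commuting pair is an LCM-pair: if $xy=yx$ then $(xy)^{L}=x^{L}y^{L}=1$ for $L=lcm(o(x),o(y))$, so $o(xy)\mid L$. Thus the commuting pairs form a subset of the LCM-pairs, giving $P\ge |G|h(G)$, with equality precisely when there is \emph{no} non-commuting LCM-pair, i.e.\ when $o(xy)\mid lcm(o(x),o(y))$ forces $xy=yx$. For abelian $G$ both sets are all of $G\times G$, so equality holds; this is the easy direction. The entire content of the converse is therefore the Lemma: \emph{every non-abelian finite group admits non-commuting $x,y$ with $o(xy)\mid lcm(o(x),o(y))$.}

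I would prove the Lemma by taking a minimal counterexample $G$. If a proper subgroup $H<G$ were non-abelian it would contain such a pair (orders and products computed in $H$ agree with those in $G$), contradicting minimality; hence every proper subgroup of $G$ is abelian, so $G$ is a minimal non-abelian (Miller--Moreno/R\'edei) group, and it suffices to exhibit the pair in each type. For $Q_{8}$ take $x=i,\ y=j$, so $o(ij)=4=lcm(4,4)$. For a minimal non-abelian $p$-group (class $2$ with $|G'|=p$, generated by non-commuting $a,b$ of orders $p^{m},p^{n}$), the class-$2$ power formula $(ab)^{p^{t}}=a^{p^{t}}b^{p^{t}}c^{\pm\binom{p^{t}}{2}}$ with $t=\max(m,n)$ and $c=[a,b]$ of order $p$ shows $o(ab)\mid p^{t}=lcm(o(a),o(b))$, since $\binom{p^{t}}{2}$ is divisible by $p$ when $p$ is odd and by $2$ once $t\ge 2$; the single exceptional case $p=2,\ t=1$ is $G\cong D_{4}$, settled by the rotation--reflection pair. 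For the non-nilpotent type $G=P\rtimes\langle b\rangle$, with $P$ elementary abelian and $b$ of order $q^{n}$ acting nontrivially, pick $a\in P$ of order $p$ moved by $b$; then $(ab)^{q^{n}}=\prod_{i=0}^{q^{n}-1}{}^{b^{i}}a$ lies in $P$, is fixed by $b$ hence central, and has order $1$ or $p$, so $o(ab)\mid p\,q^{n}=lcm(o(a),o(b))$. In each case $(x,y)$ is a non-commuting LCM-pair, contradicting minimality; this proves the Lemma and the theorem.

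The main obstacle is the Lemma, and in particular its reliance on the classification of minimal non-abelian groups; the two counting identities and the commuting-implies-LCM containment are routine. I would expect the delicate points to be the prime $2$ (where $\binom{2^{t}}{2}$ is odd exactly when $t=1$, forcing the separate $D_{4}$ argument) and the verification that the norm element $\prod_{i}{}^{b^{i}}a$ in the non-nilpotent case is central of order dividing $p$; everything else follows by substitution into $Deg(G)=|G|+P$ and $P\ge|G|h(G)$.
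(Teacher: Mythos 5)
Your proof is correct, but it takes a genuinely different route from the paper's. Your key move is the double-counting reformulation: writing $Deg(G)=|G|+P$, where $P$ counts ordered pairs $(x,y)$ with $o(xy)\mid lcm(o(x),o(y))$, and observing that $|G|h(G)=\sum_{x\in G}|C_G(x)|$ counts ordered commuting pairs, so that (since commuting pairs are always LCM-pairs) the equality $Deg(G)=|G|(h(G)+1)$ says precisely that every LCM-pair commutes. This reduces the theorem to a clean structural lemma --- every non-abelian finite group contains a non-commuting LCM-pair --- whose minimal counterexample is a minimal non-abelian (Miller--Moreno/R\'edei) group; the heredity step is transparent because a non-commuting LCM-pair in a subgroup is automatically one in the ambient group. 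The paper also funnels into minimal non-abelian groups, but by induction on $|G|$ through maximal subgroups, using the equality case of its lower-bound lemma $Deg(G)\geq \sum_{g}(|N_G(\langle g\rangle)|+1)\geq \sum_{g}(|C_G(g)|+1)$, which forces it to propagate the counting identity down to proper subgroups before invoking induction. The sharpest divergence is the $p$-group case: the paper appeals to regular $p$-groups, Mann's theorem on minimal irregular $p$-groups, powerful $p$-groups, and a GAP check for orders at most $16$, whereas you use only the R\'edei structural facts (two generators, class $2$, $|G'|=p$) together with the class-$2$ power formula, treating the one exception $p=2$, $t=1$ by hand in the dihedral group of order $8$; this is more elementary and computer-free. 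In the non-nilpotent case both arguments use the Schmidt structure $G=P\rtimes\langle b\rangle$: the paper bounds the degree of the generator $b$ from below to violate the equality, while you exhibit the explicit non-commuting pair $(a,b)$ via the norm element $\prod_{i}{}^{b^{i}}a\in P$, fixed by $b$ and hence central of order dividing $p$. What your route buys is a cleaner equality analysis and a lemma of independent interest with lighter prerequisites; what the paper's route buys is that all the work stays inside its graph-degree framework, directly reusing the lower-bound lemma it had already established.
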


Let $CP2$ be the class of finite groups $G$ such that $o(xy)\leq max\{o(x), o(y)\}$ for  all $ x,y \in G$ \cite{Deb}. For a $p-$group $G$ we denote $\Omega_{i}(G)=<\{x\in G| \ x^{p^{i}}=1\}>$ and $\mho_{i}(G)=<\{x^{p^i}\in G| \ x\in G\}>$ for all $i\in \mathbb{N}$, respectively. 
Note that the class $CP2$ of $p$-groups is  larger than the class of abelian $p$-groups, regular $p-$groups (see Theorem 3.14 of \cite{Su},II, page 47) and $p-$groups whose subgroup lattices are modular (see Lemma 2.3.5 of \cite{Sch}). Moreover, by the main theorem in \cite{W}, we infer that powerful $p-$groups for $p$ odd also belong to $CP2$.

In what follows, we adopt the notation established in the Isaacs' book on finite groups  \cite{I}.  
\section{LCM(G)}
%------------------------------------------------------------------------------------
  
We shall need the following  theorem about the groups belonging to $CP2$.

\begin{thm}(Theorem D in \cite{Deb}) \label{a} A finite group $G$ is contained in $CP2$ if and only if one of the following statements holds:

\begin{enumerate}
  \item $G$ is a $p-$group and $\Omega_{n}(G)=\{x\in G\ | \ x^{p^{n}}=1\}$ for all integers $n$.
  \item $G$ is a Frobenius group of order $p^{\alpha}q^{\beta}$, $p<q$, with kernel $F(G)$ of order $p^{\alpha}$ and cyclic complement.
\end{enumerate}

\end{thm}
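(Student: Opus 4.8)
The plan is to prove both implications by first extracting the structural consequence that is really doing the work: membership in $CP2$ forces every element of $G$ to have prime-power order. Indeed, if some $g\in G$ had order divisible by two distinct primes, then writing $g=g_1g_2$ for its commuting prime-power components produces two commuting elements of coprime order whose product $g$ has order $o(g_1)o(g_2)>\max\{o(g_1),o(g_2)\}$, contradicting $CP2$. Thus $G$ is an EPPO group (every element has prime-power order), and I would split according to whether $|\pi(G)|=1$ or $|\pi(G)|\ge 2$.

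For the $p$-group case I would prove the equivalence with statement (1) directly. The set $S_n=\{x\in G:x^{p^{n}}=1\}$ is always contained in $\Omega_n(G)$, so the real content is that $S_n$ is a subgroup exactly when $G\in CP2$. If $G\in CP2$ and $x,y\in S_n$, then $o(x),o(y)\le p^{n}$, hence $o(xy)\le\max\{o(x),o(y)\}\le p^{n}$; since orders in a $p$-group are powers of $p$ this gives $o(xy)\mid p^{n}$, so $xy\in S_n$ and $S_n=\Omega_n(G)$. Conversely, if $S_n=\Omega_n(G)$ for every $n$, then for arbitrary $x,y$ with $p^{n}=\max\{o(x),o(y)\}$ both elements lie in the subgroup $S_n$, forcing $xy\in S_n$ and $o(xy)\mid p^{n}$; hence $G\in CP2$. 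This case is routine.

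The substantive case is $|\pi(G)|\ge 2$, where I would show $G$ must be a Frobenius group of type (2). First I would record that $CP2$ forces $Z(G)=1$: a central element of prime order $p$ together with any element of order coprime to $p$ would have product of order exceeding the maximum, and as $|\pi(G)|\ge 2$ supplies such an element by Cauchy's theorem, the center is trivial. Second, any two involutions commute, since two non-commuting involutions generate a dihedral group $D_{2m}$ with $m\ge 3$, whose generating reflections have product of order $m>2$. These facts eliminate the nonsolvable EPPO groups: each group on the Suzuki--Brandl list of nonsolvable EPPO groups contains a pair of non-commuting involutions, so $G$ is solvable. Higman's structure theorem for solvable EPPO groups then gives $|\pi(G)|=2$, say $\pi(G)=\{p,q\}$, and presents $G$ either as a Frobenius group with $p$-group kernel $K=F(G)$ and $q$-group complement $H$, or as a $2$-Frobenius group; the latter is excluded because it contains a Frobenius section of the shape realized in $S_4$, again yielding two elements of equal prime order whose product has strictly larger order. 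To pin down the Frobenius data, take $h\in H$ of order $q$ and $k\in K\setminus\{1\}$; the element $h(h^{k})^{-1}=(k^{-1})^{h^{-1}}k$ lies in $K$ and is nontrivial because $C_G(h)\le H$ in a Frobenius group, so it is a nontrivial $p$-element, and $CP2$ gives $o\big(h(h^{k})^{-1}\big)\le q$, whence $p\le q$ and therefore $p<q$: the kernel carries the smaller prime. Since $q$ is then odd and $H$ is a Frobenius complement, the $q$-group $H$ is cyclic, giving the cyclic complement.

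Finally I would verify sufficiency of (2) by the Frobenius case analysis: products of two kernel elements stay in $K$, products within a single cyclic complement stay there, and a product $xy$ with $x\in K$ and $y\notin K$ lies in a complement because $xy\in K$ would force $y\in K$. The hard part, and the real heart of the argument, is to control the cases where the maximum can a priori be exceeded, namely a product of two $q$-elements from distinct complements that falls into $K$, and the precise orders arising inside the kernel and the complements. Both are governed by the coprime fixed-point-free action of the cyclic $q$-group on $K$, which I expect forces $K$ itself to satisfy statement (1) (so that $K\in CP2$) and bounds $\exp(K)$ against $q$ in exactly the way needed to guarantee $o(xy)\le\max\{o(x),o(y)\}$. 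Extracting these quantitative order bounds from the fixed-point-free action is where I anticipate the main technical obstacle.
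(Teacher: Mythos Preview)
The paper does not give a proof of this statement: Theorem~\ref{a} is simply quoted from \cite{Deb} (T\u{a}rn\u{a}uceanu's paper) and invoked as a black box throughout. There is therefore no proof in the present paper against which to compare your proposal.

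On the proposal itself: your treatment of the $p$-group case and of the forward implication ($G\in CP2$ $\Rightarrow$ Frobenius structure of type~(2)) is a reasonable outline, modulo the appeal to the Suzuki--Brandl/Higman classification of EPPO groups, which would need to be sourced and checked carefully (in particular, the step ``any two involutions commute, hence the nonsolvable EPPO groups are excluded'' is plausible but not fully justified as written). The genuine gap is precisely where you flag it: for the implication (2) $\Rightarrow$ $G\in CP2$ you must know that the Frobenius kernel $K$ itself lies in $CP2$ (otherwise two elements of $K$ already witness failure inside $G$), and you must bound the order of a product of two $q$-elements from distinct complements that happens to land in $K$. You say you ``expect'' the fixed-point-free action of the cyclic $q$-complement to force both facts, but a fixed-point-free automorphism of prime-power order on a $p$-group does not by itself obviously yield $\Omega_n(K)=\{x\in K: x^{p^n}=1\}$, nor the needed bound $\exp(K)\le q$; this is exactly the work that the original argument in \cite{Deb} has to do, and your sketch stops short of it. So what you have is an outline with an honestly acknowledged but unresolved obstacle, not a complete alternative proof.
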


\begin{defn}    Let $G$  be a periodic  group,  and let $H$ and $R$ be two non-empty   subsets of $G$. Let  $LCM(H,R)$  be the set of all $x\in H$ such that 
 $o(x^ny)\mid lcm(o(x^n),o(y))$ for all $y\in R$ and all integers $n$. 
In addition, if $LCM(G)=G$, then we say $G$ is an \it{$LCM$}\it{-group}. 
The subgroup generated by $LCM(G)$ is denoted by $LC(G)$.
\end{defn} 
\begin{exam}
Let $G=F\rtimes H$ be a Frobenius group. Let $P\in Syl_p(F)$ such that $P\in CP2$. Let $x\in P$, and let  $n$ be an integer.
For all $h\in G\setminus F$, we have $o(x^nh)=o(h)\mid lcm(o(x^n),o(h))$.
Since $F$ is a nilpotent  group and $P\in CP2$, for 
all $z\in F$, we have $o(x^nz)\mid lcm(o(x^n),o(z))$.
It follows that $P\subseteq LCM(G)$. In particular, if $F$ is an abelian group, then $LCM(G)=F$.
\end{exam}

\begin{lem} \label{12}   Let $G$  be a periodic  group. Let $H$ and $R$ be two
$G$-invariant subsets of $G$.
Then $LCM(H,R)$ is a $G$-invariant subset of $G$. Also, if $H^{-1}=H$ and $R^{-1}=R$, then for all $x\in LCM(H,R)$, we have $x^{-1}\in LCM(H,R)$.
\end{lem}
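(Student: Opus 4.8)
The plan is to check both closure properties straight from the definition of $LCM(H,R)$, using only two elementary facts valid in any group: conjugation preserves order, i.e.\ $o(w^g)=o(w)$, and $o(ab)=o(ba)$ together with $o(w)=o(w^{-1})$ (the former because $ba=a^{-1}(ab)a$). Throughout I write $w^g=g^{-1}wg$.

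For $G$-invariance, I would fix $x\in LCM(H,R)$ and $g\in G$ and show $x^g\in LCM(H,R)$. Since $H$ is $G$-invariant, $x^g\in H$, so it remains to verify the divisibility condition. Given $y\in R$ and an integer $n$, put $z=gyg^{-1}$; because $R$ is $G$-invariant, $z\in R$, and by construction $z^g=y$. The point is the identity $(x^g)^n y=(x^n)^g z^g=(x^nz)^g$, which transports the product $x^nz$ (to which the hypothesis $x\in LCM(H,R)$ applies) onto $(x^g)^n y$ by a single conjugation. Taking orders and using their conjugation-invariance gives $o((x^g)^n y)=o(x^nz)$, $o((x^g)^n)=o(x^n)$ and $o(y)=o(z)$, so the relation $o(x^nz)\mid lcm(o(x^n),o(z))$ becomes exactly $o((x^g)^n y)\mid lcm(o((x^g)^n),o(y))$. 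As $y$ and $n$ were arbitrary, $x^g\in LCM(H,R)$.

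For the inverse, I would fix $x\in LCM(H,R)$; since $H^{-1}=H$ we have $x^{-1}\in H$. Given $y\in R$ and an integer $n$, the hypothesis $R^{-1}=R$ yields $y^{-1}\in R$, and this is exactly where that hypothesis is used. The chain $o((x^{-1})^n y)=o(x^{-n}y)=o((x^{-n}y)^{-1})=o(y^{-1}x^n)=o(x^n y^{-1})$ rewrites the product in a form governed by $x\in LCM(H,R)$ applied to $y^{-1}\in R$, namely $o(x^ny^{-1})\mid lcm(o(x^n),o(y^{-1}))$. Since $o(y^{-1})=o(y)$ and $o(x^n)=o(x^{-n})=o((x^{-1})^n)$, this is precisely $o((x^{-1})^n y)\mid lcm(o((x^{-1})^n),o(y))$, so $x^{-1}\in LCM(H,R)$.

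Neither step presents a genuine obstacle: both reduce to recognizing that the two order identities above are exactly what is needed to carry the defining divisibility across a conjugation and across inversion. If there is a subtle point, it is the bookkeeping that ensures the auxiliary element ($z=gyg^{-1}$ in the first part, $y^{-1}$ in the second) lands back in $R$; this is guaranteed precisely by the $G$-invariance of $R$ and by $R^{-1}=R$, respectively, which is why those hypotheses appear.
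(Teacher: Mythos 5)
Your proof is correct and follows essentially the same route as the paper's: transport the test element back through the relevant symmetry (conjugation for the first part, inversion combined with $o(ab)=o(ba)$ for the second), apply the defining divisibility, and use invariance of orders under these operations. If anything, your version is better matched to the statement, since the paper phrases the first part via a general $\sigma\in Aut(G)$ (which would strictly require $H$ and $R$ to be characteristic), whereas your use of conjugation is exactly what the $G$-invariance hypothesis provides.
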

\begin{proof} {
Let $\sigma\in Aut(G)$, and let $v\in LCM(H,R)$, and let $x\in \langle v\rangle$.   For all $y\in R$, we have $o(xy)=o(\sigma(xy))=o(\sigma(x)\sigma(y))$. 
Since $\sigma(R)=R$, there exists $g\in R$ such that $\sigma(g)=z$. Clearly, $o(z)=o(\sigma(g))$. For all $z\in R$, we have $$o((\sigma(x)z)=o(\sigma(xg))=o(xg)\mid lcm(o(x),o(g))=lcm(o(\sigma(x)),o(z)).$$

So $\sigma(x)\in LCM(H,R)$. Also, for all $z\in R$, we have  

$$o(x^{-1}z)=o((z^{-1}x)^{-1})=o(z^{-1}x).$$ It follows that  $$o(x(z^{-1}x)x^{-1})=o(xz^{-1})\mid lcm(o(x),o(z^{-1}))=lcm(o(x^{-1}),o(z)).$$

Hence $x^{-1}\in LCM(H,R)$.
}

\end{proof}

 Minimal non-nilpotent groups are characterized by Schmidt as follows:

\begin{thm}{\rm (see (9.1.9) of \cite{Rob})} \label{11} Assume that every maximal subgroup of a finite group $G$
is nilpotent but $G$ itself is not nilpotent. Then:

(i) $G$ is solvable.

(ii) $|G| = p^mq^n $ where $p$ and $q$ are unequal prime numbers.

(iii) There is a unique Sylow $p$-subgroup $P$ and a Sylow $q$-subgroup $Q$ is cyclic.
Hence $G = QP$ and $P\trianglelefteq G $.

\end{thm}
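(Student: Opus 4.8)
The plan is to deduce everything from the single observation that \emph{every proper subgroup of $G$ is nilpotent}: a proper subgroup lies in some maximal subgroup, which is nilpotent by hypothesis, and subgroups of nilpotent groups are nilpotent. Note that once the structural statement (iii) is in hand, (i) and (ii) follow for free, since a group $G=QP$ with $P\trianglelefteq G$ a $p$-group and $Q$ a $q$-group is an extension of a nilpotent group by a nilpotent group, hence solvable of order $p^mq^n$. Nevertheless I would prove solvability first, because I need it to locate a normal Sylow subgroup.

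For solvability I would argue by induction on $|G|$. Let $N$ be a minimal normal subgroup. Every proper subgroup of $G/N$ has the form $H/N$ with $N\le H<G$, and such $H$ is nilpotent, so $H/N$ is nilpotent; thus $G/N$ is either nilpotent or again a group all of whose proper subgroups are nilpotent, and in either case $G/N$ is solvable (by induction in the latter case). Hence $G$ is solvable as soon as some minimal normal subgroup $N$ is solvable. The only way to fail is that every minimal normal subgroup is a direct power $S^k$ of a non-abelian simple group $S$; but then a factor $S$ is a proper subgroup (if $k\ge 2$, or if $N<G$), forcing $S$ nilpotent, a contradiction, unless $G=N=S$ is itself non-abelian simple. \textbf{This simple case is the main obstacle}, and I would dispatch it with Frobenius's normal $p$-complement theorem: for any prime $p\mid|G|$ and any nontrivial $p$-subgroup $H$, the normalizer $N_G(H)$ is proper (else $H\trianglelefteq G$, impossible in a non-abelian simple group) and therefore nilpotent, which forces $N_G(H)/C_G(H)$ to be a $p$-group. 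Frobenius then yields a normal $p$-complement, contradicting simplicity. Hence no such simple $G$ exists and $G$ is solvable.

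Granting solvability, I would produce a normal Sylow subgroup as follows. Since $G/G'$ is a nontrivial abelian group, $G$ has a normal subgroup $M$ of prime index $p$; $M$ is nilpotent, so each of its Sylow subgroups is characteristic in $M$ and hence normal in $G$. As $G$ is not a $p$-group, some prime $r\ne p$ divides $|M|$, and the corresponding Sylow $r$-subgroup of $M$ is then a normal Sylow $r$-subgroup of $G$; rename it $P$ and its prime $p$. By Schur--Zassenhaus, $P$ has a complement $K$, a Hall $p'$-subgroup, and $K$ is nilpotent (it is proper). If $K$ involved two distinct primes, then for each such prime $s$ the subgroup $PK_s$ (with $K_s$ the Sylow $s$-subgroup of $K$) would be proper, hence nilpotent, forcing $K_s$ to centralize $P$; then all of $K$ would centralize $P$ and $G=P\times K$ would be nilpotent, a contradiction. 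Thus $K=Q$ is a $q$-group, giving $|G|=p^mq^n$ and the semidirect structure $G=QP$ with $P\trianglelefteq G$.

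Finally, to see that $Q$ is cyclic, suppose not. A non-cyclic finite $q$-group has at least two distinct maximal subgroups $Q_1,Q_2$, and these generate $Q$. For $i=1,2$ the subgroup $PQ_i$ is proper (its index in $G$ is $[Q:Q_i]=q$), hence nilpotent, so $Q_i$ centralizes $P$. Then $Q=\langle Q_1,Q_2\rangle$ centralizes $P$, making $G=P\times Q$ nilpotent, again a contradiction. Therefore $Q$ is cyclic, completing (iii); statements (i) and (ii) follow as noted. The delicate points to get right are the reduction of solvability to the simple case and the verification of the Frobenius hypothesis there; the two-prime and cyclicity arguments are then short contradictions built on the same ``proper $\Rightarrow$ nilpotent $\Rightarrow$ commutes with the normal Sylow'' mechanism.
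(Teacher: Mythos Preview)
The paper does not supply its own proof of this theorem; it simply quotes it from Robinson (9.1.9) as a known result. Your proposal is a correct, self-contained proof and follows the classical Schmidt argument: reduce solvability to excluding a non-abelian simple group via Frobenius's normal $p$-complement criterion (your verification that $N_G(H)/C_G(H)$ is a $p$-group, using that $N_G(H)$ is proper hence nilpotent, is exactly the right check), then from solvability obtain a maximal normal subgroup $M$ of prime index, use nilpotency of $M$ to produce a normal Sylow subgroup of $G$, and finish with the ``proper $\Rightarrow$ nilpotent $\Rightarrow$ centralizes $P$'' contradictions to force $|G|=p^mq^n$ and $Q$ cyclic. Each step is sound; the only point worth a small remark is that in the inductive solvability step it suffices to note that any \emph{proper} minimal normal subgroup $N$ is nilpotent (hence solvable), so the simple case $N=G$ is indeed the only obstruction, as you say.
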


In the following theorem we give   a necessary and sufficient condition for a  finite group  to be an  $LCM-$group. 
\begin{thm}\label{1}   Let $G$  be a finite  group. 
Then  $G$ is an $LCM$-group if and only if  $G$ is a nilpotent group and each Sylow subgroup of $G$ belongs to $CP2$.

\end{thm}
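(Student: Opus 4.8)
The plan is to first reformulate the hypothesis into a more usable form. I claim that $G$ is an $LCM$-group if and only if $o(xy)\mid lcm(o(x),o(y))$ for all $x,y\in G$: the membership condition ``$o(x^nz)\mid lcm(o(x^n),o(z))$ for all $z$ and all $n$'' applied with $n=1$ gives one implication, while the reverse follows by applying the simplified condition to the element $x^n$ in place of $x$. Call this reformulated property $(\star)$. The key structural observation, which I will exploit in the reduction below, is that $(\star)$ is inherited by every subgroup $H\le G$, since the order of an element is the same whether computed in $H$ or in $G$. I treat the two implications separately.

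For the ``if'' direction, suppose $G$ is nilpotent with every Sylow subgroup in $CP2$, so that $G=P_1\times\cdots\times P_k$ is the direct product of its Sylow subgroups. Writing $x=(x_i)_i$ and $y=(y_i)_i$, orders multiply componentwise: $o(xy)=lcm_i\,o(x_iy_i)$, $o(x)=lcm_i\,o(x_i)$, and similarly for $y$. Since $P_i\in CP2$ and every order in $P_i$ is a power of $p_i$, the inequality $o(x_iy_i)\le\max\{o(x_i),o(y_i)\}$ upgrades to the divisibility $o(x_iy_i)\mid lcm(o(x_i),o(y_i))$. Taking the least common multiple over $i$, and using that distinct $P_i$ involve distinct primes, yields $o(xy)\mid lcm(o(x),o(y))$; hence $(\star)$ holds and $G$ is an $LCM$-group.

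For the ``only if'' direction, assume $(\star)$. Restricting $(\star)$ to a Sylow $p$-subgroup $P$ gives $o(ab)\mid lcm(o(a),o(b))=\max\{o(a),o(b)\}$ for all $a,b\in P$, so $P\in CP2$; it therefore remains to prove that $G$ is nilpotent. Suppose not, and let $M$ be a subgroup of $G$ of least order among the non-nilpotent subgroups; then every proper subgroup of $M$ is nilpotent, and by the inheritance remark $M$ still satisfies $(\star)$. By Theorem \ref{11}, $M=P\rtimes Q$ with $P$ the normal Sylow $p$-subgroup and $Q=\langle a\rangle$ cyclic. Since $M$ is not nilpotent, $[P,Q]\ne 1$, so $a$ does not centralise $P$; pick $g\in P$ with $aga^{-1}\ne g$. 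Now set $x=a$ and $y=ga^{-1}g^{-1}$: both have order $o(a)$, a power of $q$, while the product $xy=(aga^{-1})g^{-1}\in P$ is a nontrivial $p$-element. Then $(\star)$ would force $o(xy)\mid lcm(o(x),o(y))=o(a)$, impossible since $p\ne q$. This contradiction shows $G$ is nilpotent, completing the proof.

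The ``if'' direction and the $CP2$ reduction are routine; the crux is the nilpotency argument, where the main obstacle is manufacturing elements that violate $(\star)$. Passing to a minimal non-nilpotent subgroup and invoking Theorem \ref{11} is exactly what makes this tractable, since it supplies a normal Sylow subgroup $P$ together with a cyclic complement acting nontrivially. The explicit pair $x=a$, $y=ga^{-1}g^{-1}$ — two conjugate $q$-elements whose product is a nontrivial $p$-element — is the precise configuration that breaks $(\star)$, and the subgroup-inheritance of $(\star)$ is what allows the contradiction found in $M$ to rule out non-nilpotency of $G$.
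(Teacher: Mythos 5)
Your proof is correct, and its overall skeleton matches the paper's: the ``if'' direction via the decomposition of a nilpotent group into its Sylow subgroups, the $CP2$ property of Sylow subgroups by restricting the $LCM$ condition, and nilpotency via Schmidt's classification (Theorem \ref{11}) applied to a minimal non-nilpotent subgroup. The genuine difference is the configuration used to violate the $LCM$ property inside the Schmidt group $M=P\rtimes\langle a\rangle$, and here your version is actually tighter than the paper's. The paper picks $s\in P\setminus N_G(\langle a\rangle)$, writes $s=a(a^{-1}s)$, and asserts $o(s)\mid lcm(o(a),o(a^{-1}s))=o(a)$; that displayed equality implicitly assumes $o(a^{-1}s)\mid o(a)$, which is not justified there and can fail in a Schmidt group --- in $SL(2,3)=Q_8\rtimes C_3$ there are elements of order $6$ in the coset $a^{-1}Q_8$ with $s$ of order $4$ outside $N_G(\langle a\rangle)$, so $lcm(o(a),o(a^{-1}s))=6\neq o(a)$ (the contradiction survives there only because $4\nmid 6$, not for the stated reason). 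Your pair $x=a$, $y=ga^{-1}g^{-1}$ with $g\in P$ not centralized by $a$ avoids this issue entirely: both are $q$-elements of order $o(a)$, while $xy=(aga^{-1})g^{-1}$ is a nontrivial element of $P$ by normality of $P$, so $o(xy)$ is a nontrivial power of $p$ and cannot divide the $q$-power $lcm(o(x),o(y))=o(a)$. This commutator-style choice uses only $P\trianglelefteq M$ and $[P,\langle a\rangle]\neq 1$, and needs no information about orders of elements outside $P$, so it is the more robust argument. Your explicit reformulation $(\star)$, the subgroup-inheritance remark, and the componentwise verification of the ``if'' direction (which the paper dismisses as ``clearly'') are also worthwhile additions.
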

\begin{proof} {
If  $G$ is a nilpotent group and all Sylow subgroups of $G$ belong to $CP2$, then clearly, $LCM(G)=G$.
For the other side  suppose that $LCM(G)=G$.
 Suppose for a contradiction  that there exists a minimal non-nilpotent subgroup $A$   of $G$. From
Theorem \ref{11}, $A=S\rtimes \langle a \rangle$ where $gcd(|S|,|\langle a\rangle|)=1$. Let $s\in S$ such that $s\not\in N_G(\langle a\rangle)$.
 We have $o(a(a^{-1}s))=o(s)$. Then $o(s)\mid lcm(o(a),o(a^{-1}s))=o(a)$, which is a contradiction.
  Hence $G$ is a nilpotent group.  
Let $P\in Syl_p(G)$. Since for all $x,y\in P$, we have $o(xy)\mid lcm(o(x),o(y))=max\{o(x),o(y)\}$, we have $P\in CP2$ by definition of $CP2$.}
\end{proof}
 Let $G$ be a periodic group.
  Let $p$ be a positive integer, and let $R_p(G)=\{x\in G:  gcd(o(x),p)\neq 1\}$ and
$H_p(G)=\{x\in G: gcd(o(x),p)=1\}$. 
 
The following lemma is useful to prove Theorems \ref{2} and \ref{thm}.
\begin{lem}\label{ces}
Let $G$ be a periodic  group and $p$ a positive integer. Let $A$ be a subset of $G$ such that $H_p(G)\subseteq A$.

(i)  If  $x\in LCM(H_p(G),A)$, then $exp(\langle x^G\rangle)=o(x)$.

(ii) If $x\in LC(R_p(G), G)$, then $exp(\langle x^G\rangle)=o(x)$.

In particular, $LC(G)$ is a characteristic nilpotent subgroup of $G$.
\end{lem}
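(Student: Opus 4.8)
The plan is to prove the three assertions in Lemma~\ref{ces} as follows. The core claim is that membership in the $LCM$-set forces the conjugacy class of $x$ to have no elements of order exceeding $o(x)$, i.e. $\exp(\langle x^G\rangle)=o(x)$; parts (i) and (ii) are two incarnations of this, and the final assertion about $LC(G)$ will be harvested from it.

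First I would prove part (i). Let $x\in LCM(H_p(G),A)$ with $H_p(G)\subseteq A$. The key observation is that since $x$ has order coprime to $p$ (as $x\in H_p(G)$), every conjugate $g^{-1}xg$ also lies in $H_p(G)\subseteq A$, so the defining $LCM$-condition applies with $y=g^{-1}xg$. The natural move is to write a conjugate $x^g$ in terms of a product involving $x$: since $o(x^{-1}\cdot x^g)$ should be controlled, I would compute $o(x^{-1}(g^{-1}xg))$ and use the divisibility $o(x^n z)\mid \mathrm{lcm}(o(x^n),o(z))$ with a suitable power $n$ and $z$ a conjugate of $x$. Because all conjugates of $x$ have the same order $o(x)$, the $\mathrm{lcm}$ on the right collapses to $o(x)$, forcing any product of $x^{\pm1}$ with its conjugates to have order dividing $o(x)$. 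Iterating this over a generating set of conjugates shows every element of $\langle x^G\rangle$ has order dividing $o(x)$, hence $\exp(\langle x^G\rangle)=o(x)$ (the reverse inequality being trivial since $x$ itself lies in the subgroup). Here Lemma~\ref{12} is what guarantees $x^{-1}$ and all $\sigma(x)$ remain in the $LCM$-set, so the argument is symmetric and conjugation-stable.

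Part (ii) is the complementary case: $x\in LC(R_p(G),G)$, so now $x$ has order \emph{not} coprime to $p$, and the test element $z$ ranges over all of $G$. The strategy is the same in spirit — take $z$ to be a conjugate of $x$ and exploit that conjugates share the order of $x$ — but now I can test against arbitrary $z\in G$, which is even more information. I would again derive $o(x^{-1}x^g)\mid o(x)$ for every $g$, then bootstrap to the whole subgroup $\langle x^G\rangle$. The mild subtlety is to make sure the chosen powers $x^n$ realize the full order $o(x)$ where needed; choosing $n=1$ typically suffices, and I would record that the $\mathrm{lcm}$ of two equal-order elements is just that common order.

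For the final conclusion, that $LC(G)$ is a characteristic nilpotent subgroup, I would argue in two stages. Characteristicity follows directly from Lemma~\ref{12}: since $LCM(G)=LCM(G,G)$ is $G$-invariant (indeed $\mathrm{Aut}(G)$-invariant), the subgroup $LC(G)=\langle LCM(G)\rangle$ it generates is preserved by every automorphism. For nilpotency, I would combine parts (i) and (ii) with Theorem~\ref{1}: the exponent collapse $\exp(\langle x^G\rangle)=o(x)$ says that no conjugate-product inflates the order, which is precisely the $CP2$/$LCM$ behavior characterized in Theorem~\ref{1} as forcing nilpotency of the relevant Sylow structure; applying that characterization to $LC(G)$ (which is an $LCM$-group by construction once the exponent condition is in hand) yields that each Sylow subgroup of $LC(G)$ lies in $CP2$ and that $LC(G)$ is nilpotent. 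The main obstacle I anticipate is the bootstrapping step in parts (i) and (ii): passing from ``$o(x^{-1}x^g)\mid o(x)$ for each single conjugate'' to ``$\exp(\langle x^G\rangle)=o(x)$'' for the whole subgroup requires controlling products of several conjugates at once, and one must verify that the $LCM$-property is inherited along these products rather than only holding pairwise — this is where the careful choice of the exponent $n$ and the $G$-invariance from Lemma~\ref{12} have to be marshalled together.
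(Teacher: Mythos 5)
Your strategy for parts (i) and (ii) matches the paper's in outline (peel off one conjugate at a time, use that all conjugates of $x$ share the order $o(x)$ so the $lcm$ collapses, and iterate), but the step you yourself flag as ``the main obstacle'' --- passing from pairs to arbitrary elements of $\langle x^G\rangle$ --- is the entire content of the lemma, and your proposal does not resolve it. The paper's resolution is an induction on the word length of $y\in F=\langle x^G\rangle$ over a generating set $S\subseteq x^G$: write $y=s_1w$ with $w$ of smaller norm; the induction hypothesis gives $o(w)\mid o(x)$, hence $gcd(o(w),p)=1$, hence $w\in H_p(G)\subseteq A$; since $s_1\in LCM(H_p(G),A)$ by Lemma \ref{12}, the defining divisibility applies to the pair $(s_1,w)$ and yields $o(y)\mid lcm(o(s_1),o(w))\mid lcm(o(x),o(x))=o(x)$. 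Note the issue is not, as you phrase it, whether ``the LCM-property is inherited along these products'': the partial products never need to lie in $LCM(H_p(G),A)$. What must be checked is that they remain legitimate \emph{test elements}, i.e.\ members of $A$, and for part (i) this is exactly what the inductive order bound delivers (for part (ii) it is automatic, since there the test set is all of $G$). Without this mechanism your ``iteration'' stalls already at words of length three: the element you must test against is a product of two conjugates, not a conjugate, and in part (i) nothing in your argument entitles you to feed it into the LCM-condition.

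The second gap is in your derivation of nilpotency of $LC(G)$. You invoke Theorem \ref{1}, which requires $LC(G)$ to be an LCM-group, and you assert this holds ``by construction''; it does not. $LCM(G)$ is not closed under products (the paper's later remark needs the extra hypothesis $o(xy)=lcm(o(x),o(y))$ to conclude $xy\in LCM(G)$, and the paper records that $LCM(G)\neq LC(G)$ already for SmallGroup(16,13)), so the subgroup generated by $LCM(G)$ need not consist of LCM-elements, and Theorem \ref{1} cannot be applied to it. The route the paper actually takes (carried out in Theorem \ref{thm}) uses parts (i) and (ii) directly: for $x\in LCM(G)$ write $x=v_1\cdots v_k$ as a product of commuting elements $v_i$ of prime-power order $p_i^{\alpha_i}$; each $v_i$ is a power of $x$, so it again lies in the relevant LCM-set and $exp(\langle v_i^G\rangle)=p_i^{\alpha_i}$, whence (for finite $G$, by Cauchy's theorem) $\langle v_i^G\rangle$ is a normal $p_i$-subgroup; thus $\langle x^G\rangle=\langle v_1^G\rangle\cdots\langle v_k^G\rangle\leq Fit(G)$, and $LC(G)$, being a product of nilpotent normal subgroups, is nilpotent by Fitting's theorem. (Your characteristicity argument via Lemma \ref{12} is fine.) Finally, note that nilpotency of $LC(G)$ genuinely requires finiteness --- the paper's own example of the Burnside-type quotient $F(p,k)$ shows it fails for infinite periodic groups --- so the conclusion can only be justified where Cauchy's theorem (or some finiteness substitute) enters, which your sketch never makes explicit.
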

\begin{proof}{    Let $S$ be a subset of $x^G$  such that  $F:=\langle x^G\rangle=\langle S\rangle$ but $F\neq \langle T\rangle$ for all proper subsets $T$ of $S$.
Any $w\in F$ is just a finite sequence $w=s_{1}\ldots s_{r}$ whose entries $ s_{1},\ldots ,s_{r}$ are elements of $S\cup S^{-1}$. The integer $r$ is called the length of the element $w$ and  its  norm $|w|$ with respect to the generating set $S$ is defined to be the shortest length of  $w$ over $S$. 
 
 (i) Let $y\in F.$ 
We proceed by induction on $|y|$.   The case $|y|=0$ is trivial.
So suppose that  the result is true for all $a\in F$ with  $|a|<|y|$. There are $s_1,\ldots,s_k\in S$, $\epsilon_i\in\{1,-1\}$ and positive integers $n_1,\ldots,n_k$  such that
 $y=(s_1)^{\epsilon_1n_1}\ldots(s_k)^{\epsilon_kn_k}$. 
 We may assume that $n_1>0$.
By induction hypothesis, we have $o((s_1)^{\epsilon_1(n_1-1)}\ldots(s_k)^{\epsilon_kn_k})\mid o(x)$. It follows that 
$gcd( o((s_1)^{\epsilon_1(n_1-1)}\ldots(s_k)^{\epsilon_kn_k}),p)=1$.
Since $x^g,(x^g)^{-1}\in LCM(H_p(G),A)$ for all $g\in G$, we have 
$S\subseteq LCM(H_p(G), A)$. Consequently,

\begin{eqnarray*}
o(y)&=&o(s_1((s_1)^{\epsilon_1 (n_1-1)}(s_k)^{\epsilon_2 n_2}\ldots(s_k)^{\epsilon_k n_k})))\\&\mid& lcm(o(s_1), o((s_1)^{\epsilon_1 (n_1-1)}(s_2)^{\epsilon_2 n_2}\ldots(s_k)^{\epsilon_k n_k}))\mid\\&\vdots&\\&\mid&
lcm(o(s_1),o(x))\\&=&o(x).
\end{eqnarray*}

(ii)It is similar to the case (i) with a little change.

In particular, if $x\in LCM(G)$, then $x\in LCM(H_p(G),G)$ or
$x\in LCM(R_p,G)$.   Therefore the proof is clear from  parts (i) and (ii).
}

\end{proof}

Now, we are ready to prove one of our main results of this section. 
 
\begin{thm}\label{thm}    Let $G$  be a finite  group, and let $p$ be a positive integer. Then $$LCM(H_p(G), G) \cup  LCM(R_p(G), G)\subseteq Fit(G).$$

\end{thm}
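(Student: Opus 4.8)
The plan is to show that for every $x$ in $LCM(H_p(G),G)\cup LCM(R_p(G),G)$ the normal closure $\langle x^G\rangle$ is nilpotent; since $\langle x^G\rangle$ is then a normal nilpotent subgroup of $G$, it is contained in $Fit(G)$, and hence $x\in Fit(G)$. The observation driving the argument is that the condition defining both sets, namely $o(x^ny)\mid lcm(o(x^n),o(y))$ for all $y\in G$ and all integers $n$, is inherited by every power of $x$: replacing $x$ by $x^k$ merely reindexes the exponent via $n\mapsto kn$. Moreover every power of $x$ again lies in $H_p(G)$ or in $R_p(G)$, so every power of $x$ again belongs to the union $LCM(H_p(G),G)\cup LCM(R_p(G),G)$.

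First I would reduce to the case where $o(x)$ is a prime power. Writing $o(x)=q_1^{a_1}\cdots q_r^{a_r}$ and letting $x=x_1\cdots x_r$ be the primary decomposition inside the cyclic group $\langle x\rangle$, each factor $x_i$ is a power of $x$, hence lies in the union by the previous paragraph, and the $x_i$ pairwise commute. Since $Fit(G)$ is a subgroup, it suffices to prove $x_i\in Fit(G)$ for each $i$, so we may assume $o(x)=q^a$ for a single prime $q$.

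The heart of the argument is then immediate. With $o(x)=q^a$, Lemma \ref{ces} gives $exp(\langle x^G\rangle)=o(x)=q^a$, so every element of the normal subgroup $N:=\langle x^G\rangle$ has order a power of $q$. By Cauchy's theorem no prime other than $q$ can divide $|N|$, so $N$ is a finite $q$-group and in particular nilpotent. Being a normal nilpotent subgroup of $G$, $N$ lies in $Fit(G)$, and therefore $x\in N\subseteq Fit(G)$. Reassembling, the original $x=x_1\cdots x_r$ lies in $Fit(G)$ as a product of commuting elements of $Fit(G)$.

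The step I expect to be the real obstacle is justifying the prime-power reduction rather than trying to analyze $\langle x^G\rangle$ directly. The weaker numerical fact $exp(\langle x^G\rangle)=o(x)$ by itself does \emph{not} force nilpotency; for instance in $S_3\times C_6$ one finds an element whose order equals the exponent of a non-nilpotent normal closure. Thus the proof must genuinely exploit that the $LCM$-condition passes to powers, which is exactly what enables the decomposition into primary parts. Once the reduction to prime-power order is in place, the conclusion follows at once because prime-power exponent collapses $N$ to a $q$-group; the only routine points to verify are that each primary part $x_i$ is a power of $x$ (so it retains the $LCM$-condition and lands in the appropriate one of $H_p(G)$, $R_p(G)$) and that the $x_i$ commute, so that their product returns to the subgroup $Fit(G)$.
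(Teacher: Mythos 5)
Your proof is correct and takes essentially the same route as the paper: both decompose $x$ into commuting prime-power parts (which are powers of $x$ and hence retain the LCM-condition), apply Lemma \ref{ces} to see that each part generates a normal subgroup of prime-power exponent, hence a nilpotent normal $q$-subgroup, and conclude via $Fit(G)$. The only cosmetic difference is that the paper assembles the closures $\langle v_i^G\rangle$ into the nilpotent normal product $\langle x^G\rangle=H_1\cdots H_k$ whereas you place each primary part into $Fit(G)$ directly; your explicit remark that a power of an element of $R_p(G)$ may land in $H_p(G)$, so one must work with the union, is precisely the point hidden in the paper's closing ``similarly.''
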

\begin{proof}{First we prove that $LCM(H_p(G), G)\subseteq Fit(G))$.
Let $x\in LCM(H_p(G),G)$, and let $H=\langle x^G\rangle$.  From Lemma \ref{ces}, 
$exp(H)=o(x)$. If $o(x)$ is a power of prime number $p$, then 
clearly, $H$ is a $p$-group. So suppose that $o(x)=p_1^{\alpha_1}p_2^{\alpha_2}...p_k^{\alpha_k}$ where $p_1<p_2<...<p_k$ are prime numbers.
Then $x=v_1v_2...v_k$ where $o(v_i)=p_i^{\alpha_i}$ for all $i=1,2,...,k$ and $v_iv_j=v_jv_i$ for all $1\leq i\leq j\leq k$.
Let $H_i=\langle v_i^G\rangle$ for all $1\leq i\leq j\leq k$.
For the reason that each $H_i$  is  nilpotent normal $p_i$-subgroups of $G$, we have $H=H_1H_2...H_k$ is a nilpotent subgroup.
 Let $H_{x_i}=\langle x_i^G\rangle$ where $x_i\in LCM(H_p(G),G)=\{x_1,\ldots,x_k\}$.
 Then $C=H_{x_1}H_{x_2}\ldots H_{x_k}$ is a nilpotent subgroup of $G$.
 Since $LCM(H_p(G),G)\leq C$, we have $LCM(H_p(G),G)\subseteq Fit(G)$.
 
Similarly, we can prove that  $LCM(H_p(G), G)\subseteq Fit(G))$.

 }
\end{proof}

\begin{cor}\label{thm2}    Let $G$  be a finite  group. Then $LC(G)$ is a nilpotent characteristic subgroup of $G$. 
\end{cor}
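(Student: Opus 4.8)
The plan is to deduce the corollary directly from Theorem \ref{thm}, together with the invariance properties established in Lemma \ref{12}. Two things must be shown, namely that $LC(G)$ is characteristic and that it is nilpotent, and each follows from one of the earlier results with essentially no extra work.

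First I would settle the characteristic claim. Applying Lemma \ref{12} with $H=R=G$ (both trivially $G$-invariant and closed under inverses), the set $LCM(G)=LCM(G,G)$ is carried to itself by every $\sigma\in Aut(G)$. Consequently $\sigma(LCM(G))=LCM(G)$ for all $\sigma\in Aut(G)$, so the generated subgroup $LC(G)=\langle LCM(G)\rangle$ is characteristic in $G$.

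For nilpotency, the key observation is that the two sets appearing in Theorem \ref{thm} together cover all of $LCM(G)$. Fix any positive integer $p$. Given $x\in LCM(G)=LCM(G,G)$, its order $o(x)$ is either coprime to $p$ or not, that is, $x\in H_p(G)$ or $x\in R_p(G)$. In the first case the defining condition $o(x^ny)\mid lcm(o(x^n),o(y))$ holds for all $y\in G$ and all integers $n$ (this is exactly membership in $LCM(G,G)$), and since also $x\in H_p(G)$, we get $x\in LCM(H_p(G),G)$; in the second case the identical reasoning gives $x\in LCM(R_p(G),G)$. Hence $$LCM(G)\subseteq LCM(H_p(G),G)\cup LCM(R_p(G),G)\subseteq Fit(G),$$ the last inclusion being precisely Theorem \ref{thm}. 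Since $Fit(G)$ is a subgroup it then contains $LC(G)=\langle LCM(G)\rangle$, and a subgroup of the nilpotent group $Fit(G)$ is itself nilpotent.

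I do not anticipate any real obstacle, since the substantive content—that the relevant elements generate nilpotent normal subgroups—has already been carried out in Lemma \ref{ces} and Theorem \ref{thm}. The only point meriting a moment's care is the verification that membership in $LCM(G,G)$ restricts to membership in $LCM(H_p(G),G)$ or $LCM(R_p(G),G)$ once the coprimality dichotomy for $o(x)$ is invoked; this is immediate because the quantifier over all $y\in G$ in the definition is unchanged and only the ambient set $H$ is shrunk to one containing $x$. (One could equally take $p=1$, whence $H_p(G)=G$ and the containment $LCM(G)=LCM(H_p(G),G)\subseteq Fit(G)$ is read off directly.)
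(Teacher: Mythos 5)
Your proof is correct and follows essentially the same route as the paper: both deduce the corollary from Theorem \ref{thm} by arranging that $LCM(G)$ lands inside $Fit(G)$ (the paper picks a prime $p$ coprime to $|G|$ so that $H_p(G)=G$, which is exactly the shortcut in your closing parenthetical with $p=1$; your coprimality dichotomy for general $p$ is an equivalent variant). Your explicit appeal to Lemma \ref{12} for the characteristic property is a welcome detail that the paper's own one-line proof leaves implicit.
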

\begin{proof}
Let $p$ be a prime number which is co-prime to $|G|$.
From Theorem \ref{thm}, $$\langle LCM(H_p(G),G)\rangle =\langle LCM(G,G)\rangle=LC(G)$$ is a nilpotent characteristic subgroup of $G$. 
\end{proof}

\begin{thm}\label{2}    Let $G$  be a locally finite  periodic group.
Then 
 $  LC(G)$ is a locally nilpotent subgroup of $G$.
\end{thm}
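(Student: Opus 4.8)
The plan is to reduce the statement to the finite case already settled in Corollary \ref{thm2}. Recall that a group is locally nilpotent precisely when each of its finitely generated subgroups is nilpotent. Hence it suffices to take an arbitrary finitely generated subgroup $F \leq LC(G)$ and prove that $F$ is nilpotent.

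First I would write $F = \langle f_1, \ldots, f_m\rangle$. Since $LC(G) = \langle LCM(G)\rangle$, each generator $f_i$ is a finite word in elements of $LCM(G)$ and their inverses. Collecting into a single finite set $X$ all the elements of $LCM(G)$ occurring in these words, we obtain $F \leq \langle X\rangle =: K$. Because $G$ is locally finite and $X$ is finite, $K$ is a \emph{finite} subgroup of $G$, and of course $F \leq K$.

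The crucial observation is that $X \subseteq LCM(K)$. Indeed, if $x \in X \subseteq LCM(G) = LCM(G,G)$, then $o(x^n z) \mid lcm(o(x^n), o(z))$ holds for every $z \in G$ and every integer $n$; restricting the quantifier to the subset $z \in K \subseteq G$, the same divisibility holds for all $z \in K$, and since $x \in K$ this says exactly that $x \in LCM(K,K) = LCM(K)$. Consequently $K = \langle X\rangle \leq \langle LCM(K)\rangle = LC(K) \leq K$, so $K = LC(K)$.

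Now $K$ is a finite group with $K = LC(K)$, so Corollary \ref{thm2} applies and shows that $K = LC(K)$ is nilpotent. As $F$ is a subgroup of the nilpotent group $K$, it is itself nilpotent. Since $F$ was an arbitrary finitely generated subgroup of $LC(G)$, we conclude that $LC(G)$ is locally nilpotent. The only delicate point is the inheritance of the defining $LCM$-condition from $G$ down to the finite subgroup $K$, which works because that condition is universally quantified over $z$ and so only gets easier on a smaller index set; everything else is the standard passage from the finite to the locally finite setting via finitely generated subgroups.
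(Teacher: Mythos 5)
Your proof is correct and follows essentially the same route as the paper's: reduce an arbitrary finitely generated subgroup of $LC(G)$ to a subgroup $K$ generated by finitely many elements of $LCM(G)$, note that $K$ is finite by local finiteness, and invoke Corollary \ref{thm2} to get nilpotency. The only differences are cosmetic --- you argue directly rather than by contradiction, and you make explicit the restriction step $X \subseteq LCM(K)$ (the $LCM$-condition passing from $G$ down to $K$), which the paper's proof uses implicitly.
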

\begin{proof} 
{    Suppose for a contradiction that 
$LC(G)$ is not a locally nilpotent subgroup of $G$. Then there  exist $z_1, ...,z_t\in LCM(G)$ such that
$\langle z_1,...,z_t\rangle$ is not a nilpotent group. 
Let $H=\langle z_1,...,z_t\rangle$. Since $G$ is locally finite,  $H$ is a finite group.   From Corollary \ref{thm2},
 $\langle  z_1,...,z_t\rangle\leq LC(G)$ is a nilpotent,   which is a contradiction.
 }
 \end{proof}
 
 The following example shows that the condition $x^n\in LCM(G)$ for all integers $n$ in Corollary \ref{thm2}, is necessary.
\begin{exam}
Let $G=N_1\times ...\times N_{30}$  where $N_i\cong A_5$ for all $i=1,2,...,30$, and let 
$\sigma:=(1,2,...,30)$ be a cyclic permutation of order $30$ 
on components of $G$.
Let $H=G\rtimes \langle \sigma\rangle$.
Since  for all $h\in G$, we have $o(h\sigma)\mid 30=lcm(o(\sigma),o(h))$.
But $\langle \sigma^H\rangle=G\rtimes \langle \sigma\rangle$ is not a solvable group.

\end{exam}

In the following example we introduce  an infinite group $G$ such that its    $LC(G)$ subgroup is not a nilpotent subgroup.
\begin{exam}
The idea of the following example   works with the theory of hyperbolic groups (in the Gromov sense, \cite{Gro}) and Kazhdan's Property (T), which the reader can see the definitions in \cite{Osh} and \cite{Kaz}. Fix a prime $p>2$ and denote by $G$ the free product of two copies of $Z/pZ$. This group is hyperbolic (see Lemma 1.18 in \cite{Osh}). Via iterated small cancellation in hyperbolic groups (see section 4 in \cite{Osh}), we may construct periodic quotients of that group (see proof of Corollary 2 in \cite{Osh}), and this leads to the following. For $n=p^k$ with $k$ large enough, the quotient $F(p,k)$ of $G$ by the smallest normal subgroup of $G$ that contains all elements $g^n$, $g$ in $G$, is an infinite group. It is also a non-amenable group. To see this, we may argue as follows. The Ol'shanski\u{i} Common Quotient Theorem says that two hyperbolic groups $H$ and $H'$ always have a common quotient (see Theorem 5.7 and Corollary 5.8 in \cite{Cap}), and one can moreover assume that the torsion elements in the quotient come from the torsion elements of $H$, or of $H'$. In particular, there is a quotient $H$ of $G$ with Kazhdan's property (T) whose torsion elements have order $p$. Then, if $k$ is large enough, the quotient of $H$ by the smallest normal subgroup containing all elements $h^n$ with $h$ in $H$ is an infinite Kazhdan group; and this implies that $F(p,k)$ is not amenable (indeed since by compactness the only discrete groups that satisfy both property (T) and amenability are finite). By definition, $<LCM(F(p,k))>$ is $F(p,k)$ itself; in particular it is not nilpotent.

\end{exam}

\begin{cor} \label{4}   Let $G$  be a finite  group. 

(i) If $Fit(G)=1$, then $LC(G)=1$.

(ii) If   $N\lhd G$, then $LC(N)\leq Fit(G)$.
\end{cor}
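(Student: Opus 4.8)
The plan is to derive both parts directly from Corollary \ref{thm2}, which tells us that for any finite group the subgroup $LC$ is nilpotent and characteristic. The only additional ingredients are two standard facts about finite groups: a normal nilpotent subgroup is always contained in the Fitting subgroup $Fit(G)$ (by the defining maximality of $Fit(G)$), and a characteristic subgroup of a normal subgroup is again normal in the ambient group.

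For part (i), I would apply Corollary \ref{thm2} to $G$ itself, which gives that $LC(G)$ is a nilpotent characteristic subgroup of $G$. Being characteristic it is in particular normal, so it is a nilpotent normal subgroup of $G$ and therefore lies inside $Fit(G)$. Under the hypothesis $Fit(G)=1$ this immediately forces $LC(G)=1$.

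For part (ii), I would instead apply Corollary \ref{thm2} to the subgroup $N$, which is itself a finite group; this yields that $LC(N)$ is nilpotent and characteristic \emph{in} $N$. Since $N\lhd G$ and a characteristic subgroup of a normal subgroup is normal in the whole group, we conclude $LC(N)\lhd G$. Thus $LC(N)$ is a nilpotent normal subgroup of $G$, and hence $LC(N)\leq Fit(G)$ by maximality of the Fitting subgroup.

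There is no substantive obstacle here once Corollary \ref{thm2} is available; the whole argument is the transfer from ``characteristic in $N$'' to ``normal in $G$'' together with the defining property of $Fit(G)$. The single point that needs care is that in part (ii) Corollary \ref{thm2} must be invoked for $N$ rather than for $G$, so that the nilpotency and characteristic-in-$N$ conclusions are available before normality in $G$ is deduced.
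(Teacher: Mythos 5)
Your proposal is correct and follows essentially the same route as the paper: the paper likewise deduces both parts from the fact that $LC(\cdot)$ is nilpotent and characteristic (invoking Theorem \ref{thm}, of which Corollary \ref{thm2} is the immediate restatement you cite), using that a nilpotent normal subgroup lies in $Fit(G)$ and that a characteristic subgroup of a normal subgroup is normal in $G$. Your write-up merely makes explicit the ``characteristic in $N$ implies normal in $G$'' step that the paper leaves implicit.
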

\begin{proof}{
From Theorem \ref{thm},  $LC(G)$ is a normal nilpotent subgroup of $G$.

(i)
Since $Fit(G)=1$, we have $LC(G)=1$.
	
(ii)From Theorem \ref{thm},  $LC(N)$ is a nilpotent characteristic subgroup of $N$.
Hence $LC(N)\leq Fit(G)$.}
\end{proof}
\begin{cor}\label{hall}  Let $G$ be a finite solvable group.
If $Fit(G)\neq G$ is a Hall subgroup such that for each Sylow subgroup $P$ of $Fit(G)$, we have $P\in CP2$, then $Fit(G)=LC(G)$.

\end{cor}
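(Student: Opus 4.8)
The plan is to establish the two inclusions $LC(G)\leq Fit(G)$ and $Fit(G)\leq LC(G)$ separately; the first is immediate from earlier results and all the content sits in the second. Throughout write $F=Fit(G)$ and let $\pi=\pi(|F|)$ be the set of primes dividing $|F|$. Since $F$ is nilpotent and, by hypothesis, each of its Sylow subgroups lies in $CP2$, Theorem \ref{1} tells me that $F$ is an $LCM$-group; in particular $o(uv)\mid lcm(o(u),o(v))$ for all $u,v\in F$, and a one-line induction upgrades this to $o(u_1\cdots u_m)\mid lcm(o(u_1),\dots,o(u_m))$ for any $u_1,\dots,u_m\in F$. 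For the first inclusion I invoke Corollary \ref{thm2}: $LC(G)$ is a normal nilpotent subgroup of $G$, and $Fit(G)$ is the largest such subgroup, so $LC(G)\leq F$. Hence it remains to prove $F\subseteq LCM(G)$, which gives $F=\langle F\rangle\leq\langle LCM(G)\rangle=LC(G)$ and finishes the proof.

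To exploit the Hall hypothesis I would first record that $F$ is a normal Hall $\pi$-subgroup, so that (i) $F$ contains every $\pi$-element of $G$, and (ii) $G/F$ is a $\pi'$-group, whence for each $g\in G$ the image $\bar g$ has order equal to the $\pi'$-part $o(g)_{\pi'}$ of $o(g)$. Now fix $x\in F$, an arbitrary $z\in G$ and an integer $n$, and check the defining condition $o(x^nz)\mid lcm(o(x^n),o(z))$. As $o(x^n)$ is a $\pi$-number, I compare $\pi$- and $\pi'$-parts on the two sides: by (ii) the $\pi'$-part of $o(x^nz)$ is $o(\overline{x^nz})=o(\bar z)=o(z)_{\pi'}$ (using $x^n\in F$), and the $\pi'$-part of the right-hand side is likewise $o(z)_{\pi'}$. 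The two $\pi'$-parts cancel and the whole statement collapses to the $\pi$-local claim
\[
o(x^nz)_\pi\ \big|\ lcm\bigl(o(x^n),o(z_\pi)\bigr),
\]
where $z_\pi\in F$ is the $\pi$-part of $z$, which lies in $F$ by (i).

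The heart of the matter is to pin down the $\pi$-part of $o(x^nz)$. I would write $z=z_\pi z_{\pi'}$ with commuting factors, set $b=x^nz_\pi\in F$, and put $w:=x^nz=b\,z_{\pi'}$. By (ii) the $\pi'$-part of $o(w)$ is $d:=o(z_{\pi'})$, so $o(x^nz)_\pi=o(w^d)$; expanding the power along the normal subgroup $F$ gives
\[
w^d=(b\,z_{\pi'})^d=\prod_{i=0}^{d-1} z_{\pi'}^{\,i}\,b\,z_{\pi'}^{-i},
\]
a product of $d$ conjugates of $b$, each contained in $F$ and each of order $o(b)$. The iterated $LCM$-group property of $F$ then yields $o(w^d)\mid o(b)$, and one further application (to $b=x^nz_\pi$ with $x^n,z_\pi\in F$) gives $o(b)\mid lcm(o(x^n),o(z_\pi))$. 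Chaining these divisibilities establishes the displayed $\pi$-local claim, so $x\in LCM(G)$; as $x\in F$ was arbitrary, $F\subseteq LCM(G)$ and therefore $Fit(G)=LC(G)$.

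I expect the crux to be the passage in the last paragraph: a priori the $\pi$-part of a product $x^nz$ that projects nontrivially onto $G/F$ is not controlled by the $\pi$-parts of the factors, and the expansion $w^d=\prod_i z_{\pi'}^{\,i}bz_{\pi'}^{-i}$ is exactly the device that rewrites it as a product of equal-order elements inside the $LCM$-group $F$, which is the only place the two hypotheses on $F$ (nilpotent Hall, Sylow subgroups in $CP2$) are used together. The hypothesis $Fit(G)\neq G$ is not needed for the argument; when $F=G$ the group is itself an $LCM$-group and the identity is trivial.
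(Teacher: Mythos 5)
Your proof is correct, and its core device is the same one the paper uses: expand a suitable power of the product along the normal subgroup $Fit(G)$ into a product of conjugates lying in $Fit(G)$, then kill that product using the fact that $Fit(G)$ is nilpotent with every Sylow subgroup in $CP2$ (equivalently, by Theorem \ref{1}, an $LCM$-group). The difference is in how the two arguments handle non-coprime orders, and there your version is genuinely stronger. The paper takes $y\in G\setminus Fit(G)$ and $x\in Fit(G)$, writes $(yx)^{o(y)}=x^{y^{-1}}x^{y^{-2}}\cdots x^{y^{-o(y)}}\in Fit(G)$, deduces $o(yx)\mid o(x)o(y)$, and then asserts $o(x)o(y)=lcm(o(x),o(y))$; this last identification needs $\gcd(o(x),o(y))=1$, which the hypotheses do not guarantee. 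For instance, in $G=S_3\times C_5$ the Fitting subgroup $C_3\times C_5$ is a Hall $\{3,5\}$-subgroup with cyclic (hence $CP2$) Sylow subgroups, yet $y=(t,c)$ with $t$ a transposition and $c\neq 1$ has order $10$, while $x$ of order $15$ lies in $Fit(G)$; the paper's computation then gives only $o(yx)\mid 150$, not the required $o(yx)\mid 30$. Your $\pi/\pi'$-bookkeeping is exactly the repair: since a normal Hall $\pi$-subgroup contains every $\pi$-element, you absorb $z_\pi$ into $Fit(G)$, conjugate only by the $\pi'$-part $z_{\pi'}$ (whose order genuinely is coprime to $exp(Fit(G))$), and compare $\pi$- and $\pi'$-parts of the orders separately. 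So your argument proves the corollary in full, whereas the paper's, as written, covers only the situation in which every element outside $Fit(G)$ has $\pi'$-order (as happens in the Frobenius case). You also make explicit the reverse inclusion $LC(G)\leq Fit(G)$ via Corollary \ref{thm2}; the paper leaves that direction tacit, although it is needed for the stated equality.
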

\begin{proof}{

Let $y\in G\setminus Fit(G)$ and let $x\in Fit(G)$.
We have $(yx)^{m}=x^{y^{-1}}x^{y^{-2}}\ldots x^{y^{-m}}y^{m}$.
If $m=o(y)$, then 
$(yx)^{o(y)}=x^{y^{-1}}x^{y^{-2}}\ldots x^{y^{-m}}\in Fit(G)$.
Now, since every Sylow subgroup of $Fit(G)$ belongs to $CP2$,  we have
$(x^{y^{-1}}x^{y^{-2}}\ldots x^{y^{-m}})^{o(x)}=1$. It follows that 
$o(yx)=o(xy)\mid o(x)o(y)=lcm(o(x),o(y))$.

}
\end{proof}

\begin{cor}\label{ccc}
Let $G$ be a non-nilpotent finite group such that $2\nmid |Fit(G)|$.
Then the following statements are equivalent:

(i) 
All proper subgroups  of  $G$ are $LCM$-groups.

(ii) 
 $G$ is a minimal non-nilpotent group.

\end{cor}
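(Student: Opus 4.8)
The plan is to prove the two implications separately; the hypothesis $2\nmid|Fit(G)|$ will be needed only for (ii)$\Rightarrow$(i). For (i)$\Rightarrow$(ii) I would invoke Theorem \ref{1}, which tells us that every finite $LCM$-group is in particular nilpotent. Thus if every proper subgroup of $G$ is an $LCM$-group, then every proper (hence every maximal) subgroup of $G$ is nilpotent, while $G$ itself is not; by definition this makes $G$ a minimal non-nilpotent group. This direction uses neither the detailed structure of Theorem \ref{11} nor the hypothesis on $Fit(G)$.

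For (ii)$\Rightarrow$(i), assume $G$ is minimal non-nilpotent. By Theorem \ref{11} I write $G=P\rtimes Q$, with $P$ the unique (normal) Sylow $p$-subgroup and $Q$ a cyclic Sylow $q$-subgroup, $p\neq q$. Since $P$ is a normal nilpotent subgroup, $P\leq Fit(G)$, so $2\nmid|Fit(G)|$ forces $p$ to be odd. I would then bring in the sharper form of Schmidt's theorem: for $p$ odd the Sylow $p$-subgroup of a minimal non-nilpotent group has exponent $p$. Granting $\exp P=p$, for every subgroup $K\leq P$ and every $n\geq1$ the set $\{x\in K:x^{p^{n}}=1\}$ is all of $K$ and hence a subgroup, so $K$ satisfies condition (1) of Theorem \ref{a} and lies in $CP2$. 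Similarly every subgroup of the cyclic group $Q$ is a cyclic $q$-group, in which $\{x:x^{q^{n}}=1\}$ is the unique subgroup of its order, so these also lie in $CP2$ by Theorem \ref{a}.

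It remains to assemble these facts. Let $H$ be a proper subgroup of $G$. Since $G$ is minimal non-nilpotent, $H$ is nilpotent, hence the direct product of its Sylow subgroups. Its Sylow $p$-subgroup is contained in the unique Sylow $p$-subgroup $P$, and its Sylow $q$-subgroup is contained in a conjugate of $Q$; by the previous paragraph both lie in $CP2$. Theorem \ref{1} then gives that $H$ is an $LCM$-group, which is assertion (i).

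The step I expect to be the crux is the exponent bound $\exp P=p$. Theorem \ref{11} as recorded here supplies only solvability and the decomposition $G=P\rtimes Q$, not control of $\exp P$, so I would either cite the finer version of Schmidt's classification (which additionally gives $P'=\Phi(P)=Z(P)$ and $\exp P=p$ for $p>2$) or reprove it from the coprime action of $Q$ on $P$. This is precisely where the evenness hypothesis is used: for $p=2$ the bound only reads $\exp P\mid4$, and a $2$-group need not belong to $CP2$, so without $2\nmid|Fit(G)|$ the implication (ii)$\Rightarrow$(i) can fail.
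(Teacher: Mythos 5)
Your proof is correct, and its skeleton coincides with the paper's: (i)$\Rightarrow$(ii) follows because $LCM$-groups are nilpotent (the paper quotes Theorem \ref{thm}, you quote Theorem \ref{1}; both work and neither needs the hypothesis on $Fit(G)$), and (ii)$\Rightarrow$(i) reduces, via Schmidt's Theorem \ref{11}, to showing that the normal Sylow $p$-subgroup $P$ lies in $CP2$, after which every proper (hence nilpotent) subgroup has all its Sylow subgroups in $CP2$ and Theorem \ref{1} applies. The genuine difference is the structural fact used to put $P$ in $CP2$. The paper cites Lemma 2.3 of \cite{Jafar} to get that $G/Z(G)$ is Frobenius with elementary abelian kernel and complement of prime order, so $P$ has nilpotency class at most $2$; since $2\nmid|Fit(G)|$ forces $p>2$, class $\leq 2<p$ makes $P$ regular, and regular $p$-groups belong to $CP2$. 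You instead invoke the finer form of Schmidt's classification, $\exp P=p$ for odd $p$, which gives $CP2$ membership immediately (exponent $p$ trivially satisfies condition (1) of Theorem \ref{a}) and is inherited by every subgroup of $P$ without needing that subgroups of regular $p$-groups are regular. Both routes lean on an external fact about minimal non-nilpotent groups that Theorem \ref{11} as recorded does not supply --- you flag this explicitly and offer a citation or a coprime-action argument, the paper handles it by citing \cite{Jafar} --- so neither is more self-contained than the other. Your closing step, applying the equivalence of Theorem \ref{1} to each nilpotent proper subgroup, is if anything cleaner than the paper's appeal to Corollary \ref{hall}, whose hypothesis (that $Fit$ be a proper Hall subgroup) a nilpotent proper subgroup does not literally satisfy.
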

\begin{proof}
{ (i)$\Rightarrow$ (ii):  From Theorem \ref{thm}, all subgroups of $G$ are nilpotent. So $G$ is a minimal non-nilpotent group.

(ii)$\Rightarrow$(i):  If $G$ is a minimal non-nilpotent group, then from Theorem \ref{11},
there is a unique Sylow $p$-subgroup $P$ and a cyclic Sylow $q$-subgroup $Q$  such that $G=P\rtimes Q$.
From Lemma 2.3 of  \cite{Jafar},
$\frac{G}{Z(G)}$ is Frobenius group such
that the Frobenius kernel is elementary abelian and the Frobenius complement is of prime
order. Hence the nilpotency classes of $P$ is at most two. Since $2\nmid |Fit(G)|$,  $p>2$, so   $P$ is a regular group, and then $P\in CP2$.
 Now, from Corollary \ref{hall},
all subgroups of $G$ are $LCM$-groups.

}
\end{proof}
Note that   Corollary  \ref{ccc}, in general, is not true. For example,    a GAP \cite{Gap} check yields, the SmallGroup(160,199) is a minimal non-nilpotent group such that its  Sylow $2$-subgroup is not  a  $LCM$-group.

\begin{thm}  Let $G$ be a finite  group and $H$ be  a locally solvable  periodic group.

(i) If $G\setminus LCM(G)\subseteq \{x\in G: o(x)=exp(G)\}$, then 
$G$ is a nilpotent group.

(ii) Suppose that $[x,y]\in LCM(G)$ for all $x,y\in G$. Then 
$o(uv)\mid lcm(o(u),o(v))\cdot o([u,v])$ for all $u,v\in G$.

(iii) Suppose that $[x,y]\in LCM(H)$ for all $x,y\in H$. Then 
$o(uv)\mid lcm(o(u),o(v))\cdot o([u,v])$ for all $u,v\in H$.

\end{thm}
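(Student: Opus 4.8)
The plan is to handle the three parts in turn, using only that $LCM(G)\subseteq LC(G)$ and that, by Corollary \ref{thm2}, $LC(G)$ is nilpotent; part (iii) will then be reduced to part (ii) by local finiteness. For (i) I would argue by a dichotomy on whether $LC(G)=G$. If $LC(G)=G$, then $G=LC(G)$ is nilpotent by Corollary \ref{thm2} and there is nothing to prove. Otherwise I pick $g\in G\setminus LC(G)$; since $LCM(G)\subseteq LC(G)$ we have $g\notin LCM(G)$, so the hypothesis forces $o(g)=exp(G)=:e$. For each prime $r\mid e$ the element $g^{r}$ has order $e/r<e$, hence $g^{r}\notin\{x:o(x)=e\}$, so by the hypothesis $g^{r}\in LCM(G)\subseteq LC(G)$. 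If $e$ had two distinct prime divisors $r_{1}\neq r_{2}$, then from $g^{r_{1}},g^{r_{2}}\in LC(G)$ and $\gcd(r_{1},r_{2})=1$ I get $\langle g^{r_{1}},g^{r_{2}}\rangle=\langle g\rangle\ni g$, contradicting $g\notin LC(G)$. Hence $e=exp(G)$ is a prime power, every element of $G$ is a $p$-element, and so $G$ is a $p$-group, in particular nilpotent.

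For (ii) I would fix $u,v\in G$, set $c=[u,v]$ and pass to the normal closure $N=\langle c^{G}\rangle$. Since $c\in LCM(G)$, Lemma \ref{ces} (applied with $p$ coprime to $|G|$, so that $H_{p}(G)=G$ and $LCM(H_p(G),G)=LCM(G)$) yields $exp(N)=o(c)$. In $\overline{G}=G/N$ the image of $c$ is trivial, so $[\overline{u},\overline{v}]=\overline 1$ and $\overline{u},\overline{v}$ commute; hence $o(\overline{uv})$ divides $lcm(o(\overline{u}),o(\overline{v}))$, which in turn divides $m:=lcm(o(u),o(v))$. Consequently $(uv)^{m}\in N$, so $o\big((uv)^{m}\big)$ divides $exp(N)=o(c)$, and therefore $o(uv)\mid m\cdot o(c)=lcm(o(u),o(v))\cdot o([u,v])$, as claimed.

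For (iii), given $u,v\in H$ I set $L=\langle u,v\rangle$. Because $H$ is locally solvable and periodic, $L$ is a finitely generated periodic solvable group, hence finite. The hypothesis gives $[x,y]\in LCM(H)$ for all $x,y\in L$, and membership in $LCM(H)$ is only weakened when the test elements $z$ are restricted to the subgroup $L$, so $LCM(H)\cap L\subseteq LCM(L)$; thus $[x,y]\in LCM(L)$ for all $x,y\in L$. Applying part (ii) to the finite group $L$ then gives $o(uv)\mid lcm(o(u),o(v))\cdot o([u,v])$, which is the desired conclusion for the original $u,v$.

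The order computations in (i) and the verification of the inclusion $LCM(H)\cap L\subseteq LCM(L)$ in (iii) are routine. The step I expect to require the most care is the bookkeeping in (ii): one must genuinely use Lemma \ref{ces} to obtain $exp\big(\langle[u,v]^{G}\rangle\big)=o([u,v])$, and then check that working modulo this normal closure splits the order of $uv$ exactly into the ``commutative part'' $lcm(o(u),o(v))$ and the ``defect'' $o([u,v])$. The reduction in (iii) also leans on the standard fact that a finitely generated periodic solvable group is finite, which is what lets the finite-group argument of (ii) be imported.
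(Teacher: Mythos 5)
Your proposal is correct, and in parts (i) and (ii) it takes a genuinely different route from the paper. For (i), the paper argues Sylow-by-Sylow: assuming $G$ is not a $p$-group, $exp(G)$ has two distinct prime divisors, so no element of a Sylow subgroup $P$ can have order $exp(G)$; hence $P\subseteq LCM(G)\subseteq LC(G)$, and since $LC(G)$ is nilpotent and normal (Theorem \ref{thm}, Corollary \ref{thm2}), $P\lhd G$ for every prime, giving nilpotency. Your dichotomy --- either $LC(G)=G$ and Corollary \ref{thm2} finishes, or some $g\notin LC(G)$ has $o(g)=exp(G)$, and the coprime-powers trick ($g^{r_1},g^{r_2}\in LC(G)$ with $\gcd(r_1,r_2)=1$ would force $g\in LC(G)$) shows $exp(G)$ is a prime power, so $G$ is a $p$-group --- is equally valid and needs only elementary order arithmetic in the second branch. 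For (ii), the paper invokes the Hall-Petresco collection formula to place $(uv)^m$ in the derived subgroup of $\langle u,v\rangle$ before applying Lemma \ref{ces}; you obtain the analogous containment more simply, by noting that $u$ and $v$ commute modulo $N=\langle [u,v]^G\rangle$, so $(uv)^m\in N$, while $exp(N)=o([u,v])$ by Lemma \ref{ces} (with $p$ chosen so that $H_p(G)=G$, exactly as in Corollary \ref{thm2}) --- the collection formula is unnecessary. For (iii), the paper merely says the proof is ``similar to the case (ii)''; your reduction fills that in completely: $L=\langle u,v\rangle$ is finite because a finitely generated periodic solvable group is finite, and $LCM(H)\cap L\subseteq LCM(L)$ because restricting the test elements from $H$ to $L$ only weakens the defining condition, so part (ii) applies to $L$ and the orders involved do not depend on the ambient group. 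Both arguments rest on the same two pillars (nilpotency of $LC(G)$ and Lemma \ref{ces}); yours trades the collection formula for a quotient argument, which is more elementary and also makes the passage from (ii) to (iii) transparent.
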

\begin{proof}{
(i) Clearly, we may assume that $G$ is not a $p$-group. Then $exp(G)$ has at least two distinct prime divisors. 
Let $P\in Syl_p(G)$. By our assumption 
$P\subseteq LCM(G)$. From Theorem \ref{thm}, $LC(G)$ is normal and  nilpotent subgroup of $G$, so $P\lhd G$. It follows that every Sylow subgroup of $G$ is normal in $G$, so $G$ is a nilpotent group.

(ii) Let $u,v\in G$, and let $m=lcm(o(u),o(v))$.
From Hall-Petresco formula, we have $(uv)^m=u^mv^mc_2^{(^m_2)}\ldots c_{m-1}^{(^m_{m-1})}c_m=c_2^{(^m_2)}\ldots c_{m-1}^{(^m_{m-1})}c_m$ where  $c_i\in \gamma_i(\langle u, v\rangle)$. 
Since $\gamma_1(\langle u, v\rangle)=\langle \{[u,v]^g: g\in \langle u, v\rangle\}\rangle$ and $[u,v]\in LCM(G)$, from Lemma \ref{ces}, we have 
$exp(\gamma_1(\langle u, v\rangle))=o([u,v])$.
Then $(uv)^{m\cdot o([u,v])}=(c_2^{(^m_2)}\ldots c_{m-1}^{(^m_{m-1})}c_m)^{o([u,v])}=1$, as claimed.

(iii) It is similar to the case (ii).

}
\end{proof}

In the following example, we show that in general,  the equality  $Fit(G)=LC(G)$ is not true.
\begin{exam}
Let $G$ be a $p$-group of maximal class and order $p^n>p^{p+1}$ such that $exp(G)>p^3$. Let $M$ be the regular maximal subgroup of $G$.
Let $x\in M$ of order $exp(G)$ and $y\in G\setminus M$.
Then $o(xy^{-1})\leq p^2$ and $o(y)\leq p^2$. 
Since $o(xy^{-1}y)=exp(G)\nmid p^2=lcm(o(xy^{-1},o(y))$, we have 
$y\not\in LCM(G)$.
Consequently, $LCM(G)\subseteq M$, and so $LC(G)\leq M\neq G$.
\end{exam}
\section{$LCM$-series}
Let $G$ be a  group.
Define $LC_1=LC$ and for $i=2,3,...$, define
$LC(\frac{G}{LC_{i-1}(G)})=\frac{LC_{i}(G)}{LC_{i-1}(G)}$.
The series 

\begin{equation}\label{eq1}
LC_1(G)\leq LC_2(G)\leq...\leq LC_i(G)\leq ...
\end{equation}

is called $LCM$-series of $G$. We say the $LCM$-series (\ref{eq1}), is finite whenever there exists a positive integer $k$ such that $LC_{i+k}(G)=LC_k(G)$ for all integers $i=1,2,...$. The smallest integer $k$ such that 
$LC_{i+k}(G)=LC_k(G)$ for all integers $i=1,2,...$ is called length of series and $G$ is called $LCM$-nilpotent of class $k$.
\begin{defn}
We say the group $G$ is a $LCM$-nilpotent group, whenever 
there exists a finite $LCM$-series 
\begin{equation}\label{eq11}
LC_1(G)\leq LC_2(G)\leq...\leq LC_k(G)=G
\end{equation}
such that $\frac{LC_i(G)}{LC_{i-1}(G)}$ is a nilpotent group for all $i=2,3,...,k$. In this case the $LCM$-series (\ref{eq11}), is call a nilpotent $LCM$-series for $G$.
\end{defn}
\begin{exam}
Let $G$ be a non-abelian group of order $pq$ where $p<q$ are primes.
Then $G$ has a nilpotent $LCM$-series of length two.
Also, any nilpotent groups is a  $LCM$-nilpotent group.
Furthermore, $G$ is a finite $LCM$-nilpotent group of class one if and only if 
$G$ is a nilpotent group and all Sylow subgroups of $G$ are in $CP2$.
 
\end{exam}

For any positive integer  $n$, let $\pi(n)$ be the set of all prime divisors of $n$.
\begin{lem}
Let $G$ be a finite  $LCM$-nilpotent group of class $t$ and order $n=p_1^{\alpha_1}p_2^{\alpha_2}...p_k^{\alpha_k}$ where $p_1<...<p_k$ are prime numbers. Let $H=P_1\times...\times P_k$ where $P_i\in Syl_{p_i}(G)$  for all $i=1,...,k$. 
If $LC(\frac{G}{LC_{j-1}(G)})=LCM(\frac{G}{LC_{j-1}(G)})$ for all $j=2,...,t$, then there exists a bijection $f$ from $G$ to $H$ such that
$o(x) \mid o(f(x))$ for all $x\in G$.
\end{lem}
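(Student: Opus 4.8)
The plan is to exhibit the bijection explicitly by means of a Sylow basis and then to reduce the required order divisibility to a single multiplicative statement. Since each factor $LC_i(G)/LC_{i-1}(G)$ of the $LCM$-series is nilpotent (Corollary \ref{thm2}), $G$ is solvable, so by P.\ Hall's theorem $G$ possesses a Sylow basis; I may therefore choose the $P_i\in Syl_{p_i}(G)$ to be pairwise permutable with $P_1P_2\cdots P_k=G$. Because $|P_1|\cdots|P_k|=|G|$, the multiplication map $\mu\colon P_1\times\cdots\times P_k\to G$, $(h_1,\dots,h_k)\mapsto h_1h_2\cdots h_k$, is a surjection between finite sets of equal cardinality, hence a bijection of sets. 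Taking $f=\mu^{-1}\colon G\to H$ and recalling that in $H=P_1\times\cdots\times P_k$ one has $o(h_1,\dots,h_k)=o(h_1)\cdots o(h_k)$ (the $o(h_i)$ being pairwise coprime), the conclusion $o(x)\mid o(f(x))$ becomes the single claim that $o(h_1h_2\cdots h_k)\mid o(h_1)o(h_2)\cdots o(h_k)$ for all $h_i\in P_i$.

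I would prove this product-order claim by induction on the $LCM$-class $t$. For $t=1$ we have $G=LC_1(G)$, which is nilpotent by Corollary \ref{thm2}; then $G$ is the internal direct product of the $P_i$, the factors commute, and $o(h_1\cdots h_k)=\prod_i o(h_i)$ trivially. For the inductive step set $K=LC_{t-1}(G)$. A routine check that the $LCM$-series is compatible with passage to normal subgroups and to quotients shows that $K$ is $LCM$-nilpotent of class $t-1$ and again satisfies the standing hypothesis, while the top factor $G/K=G/LC_{t-1}(G)$ equals $LC(G/LC_{t-1}(G))=LCM(G/LC_{t-1}(G))$ and is therefore an $LCM$-group. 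By Theorem \ref{1}, $G/K$ is nilpotent and every Sylow subgroup of $G/K$ lies in $CP2$. The images $\bar P_i=P_iK/K$ form a Sylow basis of the nilpotent group $G/K$, so for $w=h_1\cdots h_k$ the image $\bar w=\bar h_1\cdots\bar h_k$ has order $o(\bar w)=\prod_i o(\bar h_i)$, which divides $m:=\prod_i o(h_i)$; consequently $w^{m}\in K$, and prime by prime the $p_i$-part $w_{p_i}$ satisfies $\overline{w_{p_i}}^{\,o(h_i)}=1$, that is $w_{p_i}^{\,o(h_i)}\in K$. By the inductive hypothesis applied to the Sylow basis $\{P_i\cap K\}$ of $K$, the product-order claim holds inside $K$.

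It then remains to show that no order inflation occurs in the extension, namely that $w_{p_i}^{\,o(h_i)}=1$ for every $i$, equivalently that the $p_i$-part of $o(w)$ already divides $o(h_i)$. This is the main obstacle. The mechanism I would use is the $CP2$ structure supplied by Theorem \ref{1} together with its description in Theorem \ref{a}: in a $CP2$ $p$-group the set $\{x:x^{p^{n}}=1\}$ is exactly the subgroup $\Omega_{n}$, so elements of $p_i$-power order bounded by $o(h_i)$ form subgroups at the level of the top factor and, combined with the control of $p_i$-elements of $K$ given by the inductive hypothesis, cannot combine to raise the $p_i$-part of $o(w)$ beyond $o(h_i)$. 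Making this last comparison precise---tracking the $p_i$-part of $o(w)$ through the sequence $1\to K\to G\to G/K\to 1$ and ruling out extra $p_i$-factors---is where essentially all the difficulty lies, and where the hypothesis $LC(\,\cdot\,)=LCM(\,\cdot\,)$ on every section is indispensable, since it is precisely this hypothesis that forces each relevant factor to be nilpotent with $CP2$ Sylow subgroups. Finally, if one only wants the existence of $f$ rather than the explicit map $\mu^{-1}$, the same $\Omega_{n}$-counts yield the inequalities $|\{x\in G:o(x)\mid d\}|\ge|\{h\in H:o(h)\mid d\}|$ needed to apply Hall's marriage theorem to the divisibility poset, giving an alternative route to the bijection.
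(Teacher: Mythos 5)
Your reduction is attractive: Hall's theorem gives a Sylow basis, the multiplication map $\mu\colon P_1\times\cdots\times P_k\to G$ is a bijection by counting, and the lemma would follow from the single claim $o(h_1h_2\cdots h_k)\mid o(h_1)o(h_2)\cdots o(h_k)$. But the proposal does not prove this claim. You yourself flag the decisive step --- ruling out ``order inflation,'' i.e.\ $w_{p_i}^{\,o(h_i)}=1$ --- as ``the main obstacle'' and ``where essentially all the difficulty lies,'' and what you offer in its place is a description of a mechanism you hope will work. That step is not a technicality; in your formulation it \emph{is} the lemma, and nothing in the $CP2$/$\Omega_n$ structure you invoke obviously delivers it: $CP2$ controls products of two elements of one Sylow $p$-subgroup, not how $K=LC_{t-1}(G)$ interacts with elements outside it across the extension $1\to K\to G\to G/K\to 1$. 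There is also an earlier, unacknowledged gap: your induction on the class $t$ needs $K$ to be $LCM$-nilpotent of class $t-1$ \emph{and} to satisfy the standing hypothesis $LC=LCM$ along the quotients of $K$'s own $LCM$-series. You call this a ``routine check,'' but it is not: for a normal subgroup one only has the one-sided containment $LCM(G)\cap K\subseteq LCM(K)$, the lemma's hypothesis concerns the quotients $G/LC_{j-1}(G)$ rather than $K/LC_{j-1}(K)$, and $LCM$ is not obviously preserved under passing to quotients (from $o(x^nzN)\mid lcm(o(x^n),o(z))$ one cannot conclude $o(x^nzN)\mid lcm(o(x^nN),o(zN))$, since orders may drop unevenly modulo $N$). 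The closing fallback via Hall's marriage theorem is likewise only asserted; the required counting inequalities are never derived.

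Note also that you are trying to prove something strictly stronger than the statement: the lemma only asks for \emph{some} bijection $f$ with $o(x)\mid o(f(x))$, whereas you fix the canonical bijection $f=\mu^{-1}$ and must then verify a pointwise divisibility for it. The paper exploits precisely the freedom you give up: it inducts on $|G|$, factors out a minimal normal subgroup $N\leq LC_1(G)$ contained in $P_1$, applies the inductive hypothesis to $G/N$, and then assembles $f$ coset by coset along a transversal, permuting targets inside each coset $xN$ as needed. If you want to salvage your route, the honest formulation is that you have reduced the lemma to an (unproved, and possibly harder) product-order conjecture for Sylow bases of $LCM$-nilpotent groups.
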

\begin{proof}
{
We proceed by induction on $|G|$. Let $N$ be a normal minimal subgroup of $G$ such that $N\leq LC_1(G)$. We may assume that $N\leq P_1$.
Let $Q_i=\frac{P_i}{P_i\cap N}$ for $i=1,2,...,k$.
By induction hypothesis, there exists a bijection $\theta$ from
$\frac{G}{N}$ onto $Q_1\times ...\times Q_k$ such that
$o(xN)\mid o(\theta(xN))$ for all $xN\in \frac{G}{N}$.
Let $\theta(N)=M$. Let $x\in G$. Since $G$ is a solvable group, we may assume that
$G=P_1P_2...P_k$. Then $x=x_1x_2...x_k$ where $x_i\in P_i$ for all $i=1,2,...,k$. Let $H=N\rtimes\langle x\rangle$. Set $o(x)=m$, and $gcd(m,|P_1|)=c$. 
We have $ H=(N\langle x^{m/c}\rangle) \rtimes \langle x^{c}\rangle$. 
  Let  $h\in N$. If $x_1h=1$, then 
  $o(xh)=o(x_1h,x_2,...,x_k)$. So suppose that $x_1h\neq 1$.
Clearly, $m/c\mid o(xh)$. 
Then $(xh)^{m/c}=h^{x^{-1}}h^{x^{-2}}...h^{x^{-m/c}}x^{m/c}\in N\langle x^{m/c}\rangle$. If $c>p$, then $N\leq LC(N\langle x^{m/c}\rangle)$, and so
$o(xh)=o(x)=o(x_1h,x_2,...,x_k)$.
So suppose that $c=p$. If $h^{x^{-1}}h^{x^{-2}}...h^{x^{-m/c}}=x^{-m/p}$, then
$h\in \langle x\rangle$, and so
$o(xh)=o(x)=o(x_1h,x_2,...,x_k)$.
 If $h^{x^{-1}}h^{x^{-2}}...h^{x^{-m/c}}\neq x^{-m/p}$, then
 $o(xh)=o(x)\mid o(x_1h,x_2,...,x_k)$.
 Hence we may define a bijection $f_x$ from $xN$ onto
 $\theta(x)M$ such that $o(xh)\mid o(f_x(xh))$ for all $h\in N$.
 Let $X$ be a left transversal for $N$ in $G$.
Then $f= \bigcup_{x\in X}f_x$ has the desired property.   
 }
\end{proof}

Given a finite group $G$, let $\psi(G)=\sum\limits_{x\in G}o(x)$. 
Many studies have been done on the function $\psi$,   to find an exact upper or lower  bound for sums
of element orders in non-cyclic finite groups.
Hence the following corollaries  could be useful to find some bound for the value of function $\psi$.

\begin{cor}
Let $G$ be a finite  $LCM$-nilpotent group of class $t$ and order $n=p_1^{\alpha_1}p_2^{\alpha_2}...p_k^{\alpha_k}$ where $p_1<...<p_k$ are prime numbers. Let $H=P_1\times...\times P_k$ where $P_i\in Syl_{p_i}(G)$  for all $i=1,...,k$. 
If $LC(\frac{G}{LC_{j-1}(G)})=LCM(\frac{G}{LC_{j-1}(G)})$ for all $j=2,...,t$, then $\psi(G)\leq \psi(H)$.
\end{cor}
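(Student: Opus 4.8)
The plan is to read this off almost immediately from the preceding Lemma, which under \emph{exactly} the hypotheses of this corollary (finite $LCM$-nilpotent of class $t$, with $LC(G/LC_{j-1}(G))=LCM(G/LC_{j-1}(G))$ for $j=2,\dots,t$) produces a bijection $f$ from $G$ to $H$ satisfying $o(x)\mid o(f(x))$ for every $x\in G$. The only additional ingredient needed is the elementary observation that a divisibility relation between positive integers upgrades to a numerical inequality: from $o(x)\mid o(f(x))$ we immediately get $o(x)\le o(f(x))$.

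First I would invoke the Lemma to obtain the order-respecting bijection $f\colon G\to H$. Before doing so I would note that such a bijection can exist because the cardinalities agree, namely $|H|=\prod_{i=1}^{k}|P_i|=\prod_{i=1}^{k}p_i^{\alpha_i}=|G|$, which also justifies reindexing any sum over $H$ along $f$. Then I would apply the termwise bound $o(x)\le o(f(x))$ inside the sum defining $\psi(G)$ and perform the change of variable $y=f(x)$, exploiting bijectivity to turn the sum over $G$ into a sum over $H$. Concretely, the estimate runs as
\[
\psi(G)=\sum_{x\in G}o(x)\le \sum_{x\in G}o(f(x))=\sum_{y\in H}o(y)=\psi(H),
\]
where the first and last equalities are the definitions of $\psi$, the inequality is the pointwise bound coming from divisibility, and the middle equality is the reindexing by the bijection $f$.

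I do not expect any genuine obstacle here, since all of the group-theoretic difficulty—constructing a bijection that respects element orders across the $LCM$-series—has already been absorbed into the Lemma. The residual content is purely combinatorial: verifying that $a\mid b$ forces $a\le b$ for positive integers, and that summing a termwise inequality over a bijectively matched pair of finite sets preserves the inequality. Thus the entire proof amounts to citing the Lemma and writing down the displayed chain of (in)equalities.
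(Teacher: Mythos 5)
Your proposal is correct and matches the paper's intended argument: the paper states this as an immediate corollary of the preceding Lemma (giving the bijection $f\colon G\to H$ with $o(x)\mid o(f(x))$), and your chain $\psi(G)=\sum_{x\in G}o(x)\le\sum_{x\in G}o(f(x))=\sum_{y\in H}o(y)=\psi(H)$ is exactly the deduction the authors leave to the reader.
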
  We denote  by $\Delta(n)$ the set of all  finite    $LCM$-nilpotent groups $G$  of class $t_G$ such that  $LC(\frac{G}{LC_{i-1}(G)})=LCM(\frac{G}{LC_{i-1}(G)})$ for all $i=2,...,t_G$.
\begin{cor}
Let $G\in \Delta(n)$ such that $G$ is not a nilpotent group. Then there exists $H\in \Delta(n)$ such that $H$ is nilpotent and $\psi(G)\leq \psi(H)$.
\end{cor}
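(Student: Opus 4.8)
The plan is to use the preceding corollary to squeeze $\psi(G)$ below the value on a nilpotent group, and then to arrange that the dominating group actually lies in $\Delta(n)$. First I would apply that corollary: writing $n=p_1^{\alpha_1}\cdots p_k^{\alpha_k}$ and choosing $P_i\in Syl_{p_i}(G)$, it yields $\psi(G)\le\psi(H_0)$ with $H_0=P_1\times\cdots\times P_k$. As a direct product of $p$-groups, $H_0$ is nilpotent, so a nilpotent dominating group is immediately at hand. The genuinely delicate point is that $H_0$ need not belong to $\Delta(n)$: splitting the coprime direct factors shows that $LCM(H_0)=LCM(P_1)\times\cdots\times LCM(P_k)$, and hence that the $\Delta(n)$ condition for $H_0$ reduces to the same condition on each $P_i$, which can fail once some $P_i\notin CP2$.

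To bypass this obstruction I would dominate $\psi(H_0)$ by the cyclic group. Since $\psi$ is multiplicative across direct factors of coprime order, $\psi(H_0)=\prod_{i=1}^{k}\psi(P_i)$; and for each prime-power order one has $\psi(P_i)\le\psi(C_{p_i^{\alpha_i}})$ by the known maximality of cyclic groups for the function $\psi$ among groups of a fixed order. Multiplying these inequalities and using $C_n\cong C_{p_1^{\alpha_1}}\times\cdots\times C_{p_k^{\alpha_k}}$ gives $\psi(H_0)\le\psi(C_n)$, and therefore $\psi(G)\le\psi(C_n)$.

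It remains to verify $C_n\in\Delta(n)$, which is the clean step. The group $C_n$ is abelian, so each of its Sylow subgroups is an abelian $p$-group and thus lies in $CP2$; by Theorem \ref{1} this forces $C_n$ to be an $LCM$-group, i.e.\ $LCM(C_n)=LC_1(C_n)=C_n$. Consequently the $LCM$-series of $C_n$ has length one, its $LCM$-nilpotency class is $t=1$, and the defining requirement $LC(C_n/LC_{i-1}(C_n))=LCM(C_n/LC_{i-1}(C_n))$ for $i=2,\dots,t$ is vacuous. Hence $C_n$ is a nilpotent member of $\Delta(n)$, and taking $H=C_n$ finishes the proof.

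I expect the main obstacle to be exactly the membership $H\in\Delta(n)$, not the inequality on $\psi$. The group produced by the preceding corollary carries no guarantee of satisfying the $\Delta(n)$ condition, so the crux is to trade it for one that does; the cyclic group is the safe replacement, but securing it requires the two auxiliary inputs above, namely multiplicativity of $\psi$ over coprime factors and the prime-power bound $\psi(P_i)\le\psi(C_{p_i^{\alpha_i}})$, which is where I anticipate the actual work and the need for an external reference.
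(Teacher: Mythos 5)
Your argument is correct, but it is not the paper's route. The paper gives this corollary no separate proof: it is meant to be read off from the preceding corollary, with $H=P_1\times\cdots\times P_k$ (the direct product of the Sylow subgroups of $G$) serving as the nilpotent dominating group, and with the membership $H\in\Delta(n)$ left entirely implicit. You instead dominate by $C_n$, paying with two external facts --- multiplicativity of $\psi$ over direct factors of coprime order, and the known theorem (Amiri, Jafarian Amiri, Isaacs) that $\psi(P)\le\psi(C_{|P|})$ --- after which membership is genuinely trivial: by Theorem \ref{1}, $C_n$ is an $LCM$-group, so its $LCM$-series has length one and the defining condition of $\Delta(n)$ (which only constrains the quotients for $i\ge 2$) is vacuous. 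What your route buys is precisely the step the paper glosses over: nothing in the paper shows that the Sylow product lies in $\Delta(n)$, since by the coprime splitting this reduces to conditions on the quotients $P_i/LC_{j-1}(P_i)$, $j\ge 2$, which are nowhere verified; in this sense your proof is more complete. What it costs is self-containedness: once the cited theorem is admitted, even in its general form $\psi(G)\le\psi(C_{|G|})$, the corollary holds for every group of order $n$ with $H=C_n$, so the hypotheses $G\in\Delta(n)$ and non-nilpotency, and indeed the preceding corollary, become superfluous --- whereas the paper clearly intends the statement as an application of its own machinery. One small correction to your diagnosis of the obstruction: since the $\Delta(n)$ condition starts at $i=2$, the failure mode ``some $P_i\notin CP2$'' is not the right one --- $P_i\notin CP2$ only says $LCM(P_i)\ne P_i$, not that $LCM(P_i)$, or the $LCM$ sets of the later quotients, fail to be subgroups; the genuine unverified issue is whether $LCM\bigl(H_0/LC_{j-1}(H_0)\bigr)$ is a subgroup for $j\ge 2$.
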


\begin{rem}
Let $G$ be a finite group. Let $x,y\in LCM(G)$ such that  $o(xy)=lcm(o(x),o(y))$. 
Let   $h\in G$. We have 
\begin{eqnarray*}
o(xyh)&\mid& lcm(o(x),o(yh))\\&\mid& lcm(o(x),o(y),o(h))\\&=&lcm(o(xy),o(h)).
\end{eqnarray*}
It follows that $xy\in LCM(G)$.
\end{rem}
With the computational group theory system GAP, the G:=SmallGroup(16,13) the $LC(G)$ is different from $LCM(G)$. In view of this example,
answering to the following questions would be interesting.

\begin{que}
(1) What is the set of all finite groups   $G$,  with $LC(G)=LCM(G)$?

(2) What is the set of all $LCM$-nilpotent groups of class two?
\end{que}

\section{LCM-Graph}

We start this section with the following  proposition  which  determines a normal Sylow subgroup in a finite group  using its graph.
\begin{prop}\label{gra}
Let $G$  be a finite group, and let $P\in Syl_p(G)$.
If all elements of $P$ are connected with all elements of 
$G\setminus P$, then $P\lhd G$.
\end{prop}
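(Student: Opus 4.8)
The plan is to work with the set $S$ of all $p$-elements of $G$ (those whose order is a power of $p$) and to prove that $S=P$; since a Sylow $p$-subgroup is normal exactly when it is the unique one, i.e.\ when it contains every $p$-element, this gives $P\lhd G$.

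First I would isolate the elementary consequence of the hypothesis that does the real work: for \emph{every} Sylow $p$-subgroup $Q$ of $G$ one has $QS\subseteq S$. The key observation is that the connection relation is conjugation-invariant. Indeed, if $x\in P$ and $y\in G\setminus P$, then for any $g\in G$ we have $o(x^g y^g)=o((xy)^g)=o(xy)$ together with $o(x^g)=o(x)$ and $o(y^g)=o(y)$, so the hypothesis transfers verbatim to the pair $(P^g,\,G\setminus P^g)=(Q,\,G\setminus Q)$. Hence all elements of $Q$ are connected to all elements of $G\setminus Q$. Now take $g\in Q$ and $s\in S$. If $s\in Q$ then $gs\in Q\subseteq S$; if $s\notin Q$ then $s\in G\setminus Q$, so $o(gs)\mid lcm(o(g),o(s))$, which is a power of $p$ because both $o(g)$ and $o(s)$ are. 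In either case $gs\in S$, proving $QS\subseteq S$.

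Next I would run a stabilizer argument. Let $\Sigma=\{g\in G:\ gS=S\}$, which is a subgroup of $G$ (the stabilizer of $S$ under the left-regular action of $G$ on its subsets). Each Sylow $Q$ lies in $\Sigma$: from $QS\subseteq S$ and injectivity of left translation on the finite set $S$ we get $qS=S$ for every $q\in Q$. On the other hand, since $1\in S$, every $g\in\Sigma$ satisfies $g=g\cdot 1\in gS=S$, so $\Sigma\subseteq S$. Let $W=\langle Q:\ Q\in Syl_p(G)\rangle$ be the subgroup generated by all Sylow $p$-subgroups. Then $W\subseteq\Sigma\subseteq S$, so every element of $W$ is a $p$-element, which forces $W$ to be a $p$-group by Cauchy's theorem. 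As $W$ contains the Sylow subgroup $P$ and $P$ is a maximal $p$-subgroup, $W=P$; but $W$ contains every Sylow $p$-subgroup, so each Sylow $p$-subgroup equals $P$. Thus $P$ is the unique Sylow $p$-subgroup and $P\lhd G$.

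I expect the only genuinely nontrivial step to be the first one: realizing that the hypothesis, stated only for the fixed subgroup $P$, automatically transfers to every conjugate of $P$ and therefore yields $QS\subseteq S$ for all Sylow subgroups simultaneously. Once this uniformity is in hand, the stabilizer observation collapses the subgroup generated by all Sylow $p$-subgroups into the set $S$ of $p$-elements, and the conclusion follows from Cauchy's theorem and the maximality of $P$ among $p$-subgroups. Notably, no appeal to the earlier $LCM$ machinery is required; everything rests on the single order bound $o(gs)\mid lcm(o(g),o(s))$ being a power of $p$.
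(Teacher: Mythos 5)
Your proof is correct, and it rests on the same two pillars as the paper's own argument: the conjugation-invariance of the connectivity hypothesis (so that it holds for every conjugate $P^g$, i.e.\ for every Sylow $p$-subgroup), and the observation that connectivity forces a product $gs$ to be a $p$-element whenever $g$ lies in a Sylow $p$-subgroup $Q$ and $s$ is a $p$-element (either $s\in Q$, so $gs\in Q$, or $s\in G\setminus Q$ and the edge condition gives $o(gs)\mid lcm(o(g),o(s))$, a power of $p$). Where you genuinely diverge is in how this multiplicative closure is converted into a subgroup statement. The paper argues by contradiction: it takes a $p$-element $y\notin P$, checks that $xy$ and then $xyt$ (with $x,t\in P$) are again $p$-elements, and from these two steps asserts that $\langle P,y\rangle$ is a $p$-group, whence $y\in P$ by maximality of $P$. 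That assertion really requires an induction on word length (every product $g_1g_2\cdots g_n$ with each factor in some Sylow $p$-subgroup is a $p$-element), which the paper only gestures at. Your stabilizer device --- $\Sigma=\{g\in G: gS=S\}$ is a subgroup, contains every Sylow $p$-subgroup because $QS\subseteq S$ with $S$ finite, and satisfies $\Sigma\subseteq S$ because $1\in S$ --- sidesteps that induction entirely and gives in one stroke that the subgroup generated by \emph{all} Sylow $p$-subgroups consists of $p$-elements, hence is a $p$-group equal to $P$. So your route is direct rather than by contradiction, and it rigorously fills the closure step that the paper's proof glosses over; the paper's version is shorter but, as written, incomplete at exactly that point.
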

\begin{proof}
{  

Suppose there exists $y\in \Omega_{|P|}(G)\setminus P$. For all $x\in P$, 
$o(xy)$ is a power of $p$.  
It follows that $xy\in \Omega_{|P|}(G)$, and so $xy\in P^g$ for some $g\in G$. Clearly, all elements of $P^g$ are connected with all elements of 
$G\setminus P^g$.
Let $t\in P$. If $t\in P^g$, then $xyt\in P^g$, and so $o(xyt)$ is a power of $p$.
If $t\not\in P^g$, then $t\in G\setminus P^g$, and so 
$o(xyt)\mid lcm(o(xy),o(t))$. 
So $xyt\in \Omega_{|P|}(G)$.
Hence the group generated by $\langle P,y\rangle$ is a $p$-group, and so
$y\in P$, which is a contradiction. It follows that $P$ is the unique subgroup of $G$, and so
$P\lhd G$.
}
 \end{proof} 
 
Let $G$ and $H$ be two finite groups of the same order.
Let $f$ be a function from $G$ to $H$.
We say $f$ is a homomorphism from $\Gamma(G)$ to $\Gamma(H)$ whenever 
$f$ maps endpoints of each edge in $\Gamma(G)$ to endpoints of an edge in $\Gamma(H)$. Formally, $\{u,v\}\in Ed(G)$ implies $\{f(u),f(v)\}\in Ed(H)$, for all pairs of vertices $u, v \in V(G)$. 
Clearly, if $f$ is a bijection, we say $\Gamma(G)$ is isomorphic to
$\Gamma(H)$, and we denote this by $\Gamma(G)\cong \Gamma(H)$.

\begin{thm}
Let  $G$ be a finite  group of  order $n$. Then $\Gamma(G)\cong \Gamma(C_n)$ if and only if $G$ is nilpotent an  every Sylow subgroup of $G$ is in $CP2$.
\end{thm}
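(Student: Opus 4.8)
The plan is to reduce the statement to Theorem~\ref{1} by recognizing that $\Gamma(G)\cong\Gamma(C_n)$ holds exactly when $G$ is an $LCM$-group. The guiding observation is that $\Gamma(C_n)$ is the complete graph on $n$ vertices (with a loop at every vertex), and any graph isomorphic to a complete graph is itself complete.

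First I would check that $\Gamma(C_n)$ is complete. Since $C_n$ is abelian, for any $a,b\in C_n$ we have $(ab)^{lcm(o(a),o(b))}=a^{lcm(o(a),o(b))}b^{lcm(o(a),o(b))}=1$, so $o(ab)\mid lcm(o(a),o(b))$; thus every pair of distinct vertices is joined by an edge. Moreover $o(x^2)\mid o(x)$ for every $x$, so each vertex also carries a loop. Hence $\Gamma(C_n)$ is the complete graph in which every vertex has a loop.

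Next I would prove the equivalence: $\Gamma(G)$ is complete if and only if $LCM(G)=G$. For the ``if'' direction, when $LCM(G)=G$, taking $n=1$ in the definition of $LCM(G)$ gives $o(xy)\mid lcm(o(x),o(y))$ for all $x,y\in G$, so every pair is joined and $\Gamma(G)$ is complete. For the ``only if'' direction, suppose $\Gamma(G)$ is complete, fix $x\in G$, an arbitrary $z\in G$, and an integer $n$. The elements $x^n$ and $z$ are both vertices of $\Gamma(G)$, so completeness yields $o(x^nz)\mid lcm(o(x^n),o(z))$; as $z$ and $n$ were arbitrary, $x\in LCM(G)$, and therefore $LCM(G)=G$.

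Finally I would assemble the two implications. If $\Gamma(G)\cong\Gamma(C_n)$, then since $\Gamma(C_n)$ is complete so is $\Gamma(G)$, whence $LCM(G)=G$, and Theorem~\ref{1} yields that $G$ is nilpotent with every Sylow subgroup in $CP2$. Conversely, if $G$ is nilpotent with every Sylow subgroup in $CP2$, then Theorem~\ref{1} gives $LCM(G)=G$, so $\Gamma(G)$ is complete; since $\Gamma(C_n)$ is also complete on the same number $n$ of vertices and both have a loop at every vertex, any bijection of vertex sets is a graph isomorphism, giving $\Gamma(G)\cong\Gamma(C_n)$. The only delicate point is the bookkeeping with loops, ensuring that the isomorphism of the underlying complete graphs respects the loops; since loops are automatic in any $LCM$-graph this is immediate, and the substance of the argument is simply the identification of the graph-theoretic condition ``complete'' with the algebraic condition $LCM(G)=G$ already settled by Theorem~\ref{1}.
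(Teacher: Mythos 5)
Your proposal is correct and takes essentially the same route as the paper: both identify $\Gamma(G)\cong\Gamma(C_n)$ with completeness of $\Gamma(G)$, observe that completeness is equivalent to $LCM(G)=G$ (since powers $x^n$ are themselves vertices), and then invoke Theorem~\ref{1}. The only difference is cosmetic: the paper splits the converse into the $p$-group case (handled directly from the definition of $CP2$) and the non-$p$-group case, whereas your uniform argument via $LCM(G)=G$ covers both at once and is slightly cleaner.
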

\begin{proof}
{If $G$ is nilpotent an  every Sylow subgroup of $G$ is in $CP2$, then by Lemma \ref{1}, $LC(G)=G$. So $\Gamma(G)$ is a complete graph and so
is isomorphic to $\Gamma(C_n)$.

Suppose that $\Gamma(G)\cong \Gamma(C_n)$. First suppose that $G$ is a $p$-group. Let $x\in G$.  Since $\Gamma(G)$ is a complete graph for all $y\in G$,  
$x$ is connected to $y$. Then $o(xy)\mid lcm(o(x),o(y))=max\{o(x),o(y)\}$.
Therefore  $G\in CP2$.  So suppose that $G$ is not a $p$-group.
Let $x\in G\setminus E(G)$ and $n$ an integer.
Since $\Gamma(G)$ is a complete graph for all $y\in G$,  
$x^n$ is connected to $y$. Then $o(x^ny)\mid lcm(o(x^n),o(y))$, and hence
$x\in LCM(G)$.  Since $G$ is not a $p$-group, we have
$LC(G)=G$. By Lemma \ref{1},
$G$ is nilpotent and all Sylow subgroups of $G$ are in $CP2$.
}
\end{proof}

In the following lemma we find a minimal and a maximal bound for $Deg(G)$. 
\begin{lem}
Let $G$ be a finite group of order $n$.
Then $$|G|(h(G)+1)\leq Deg(G)\leq n(n+1).$$
\end{lem}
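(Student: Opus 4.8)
The plan is to prove both inequalities through a uniform per-vertex estimate on $deg(g)$, after first pinning down the behaviour of loops. First I would observe that every vertex carries a loop: since $x^2\in\langle x\rangle$ we have $o(x^2)\mid o(x)=lcm(o(x),o(x))$, so $x$ is always adjacent to itself, and by the stated convention this loop contributes $2$ to $deg(x)$. The upper bound is then immediate: a vertex $x$ can be joined to at most the other $n-1$ vertices, and together with its loop this gives $deg(x)\le (n-1)+2=n+1$; summing over the $n$ vertices yields $Deg(G)=\sum_{g\in G}deg(g)\le n(n+1)$.

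For the lower bound the key observation is that commuting elements are always adjacent. Indeed, if $xy=yx$ then $\langle x,y\rangle$ is abelian, and writing $m=lcm(o(x),o(y))$ we get $(xy)^m=x^my^m=1$, so $o(xy)\mid lcm(o(x),o(y))$ and the vertices $x$ and $y$ are joined. Consequently every element of the centraliser $C_G(x)$ is adjacent to $x$; counting the $|C_G(x)|-1$ edges to the other members of $C_G(x)$ together with the loop at $x$ gives $deg(x)\ge (|C_G(x)|-1)+2=|C_G(x)|+1$.

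Summing this estimate over $G$ reduces the lower bound to the classical counting identity $\sum_{x\in G}|C_G(x)|=|G|\,h(G)$: grouping the elements by conjugacy class $C$ and using $|C_G(x)|=|G|/|C|$ for $x\in C$, each class contributes $|C|\cdot(|G|/|C|)=|G|$, and there are $h(G)$ classes. Therefore
$$Deg(G)\ \ge\ \sum_{x\in G}\bigl(|C_G(x)|+1\bigr)\ =\ |G|\,h(G)+|G|\ =\ |G|\bigl(h(G)+1\bigr),$$
which is exactly the desired lower bound.

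None of the individual steps is genuinely difficult; the only real content is recognising that adjacency in $\Gamma(G)$ is implied by commutativity (via the abelian identity $(xy)^m=x^my^m$), which converts the combinatorial quantity $Deg(G)$ into the arithmetic of centraliser orders. The Burnside-type identity $\sum_{x}|C_G(x)|=|G|\,h(G)$ then does the rest. The main point I expect to verify carefully is simply that loops are counted twice in $deg$, so that the extra $+1$ per vertex matches the $+1$ in $h(G)+1$ rather than leaving the bound off by an additive constant; I also note in passing that the upper bound is attained precisely when $\Gamma(G)$ is complete and the lower bound when every adjacency already comes from commutativity, the abelian case being exactly where both coincide since there $h(G)=|G|$.
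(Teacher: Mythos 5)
Your proof is correct, and its skeleton is the same as the paper's: bound $deg(g)$ below by $|C_G(g)|+1$, sum over $g$, and invoke the identity $\sum_{g\in G}|C_G(g)|=|G|\,h(G)$. The one substantive difference is how adjacency is justified: you show that commuting elements are adjacent via the trivial identity $(xy)^m=x^my^m$ in the abelian group $\langle x,y\rangle$, whereas the paper asserts (without proof) the stronger statement that every $x\in N_G(\langle g\rangle)$ is adjacent to $g$, and then passes from $|N_G(\langle g\rangle)|$ down to $|C_G(g)|$. The paper's normalizer claim is in fact true, but it is genuinely nontrivial (one needs to compute $(xg)^m=g^{1+r+\cdots+r^{m-1}}$ for the conjugation action $g^x=g^r$ and check divisibility prime by prime, e.g.\ by a lifting-the-exponent argument, with $p=2$ requiring separate care); since the final inequality only ever uses $|C_G(g)|$, your route removes an unproved and unnecessary intermediate step, which is a genuine improvement in rigor at no cost. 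You also handle two points the paper glosses over: you verify the loop convention (loops counted twice, so $deg(g)=|Ed_G(g)|+1$ with $g\in Ed_G(g)$), which is exactly what makes the ``$+1$'' in $h(G)+1$ come out right, and you actually prove the upper bound $Deg(G)\le n(n+1)$, which the paper states in the lemma but never addresses in its proof.
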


\begin{proof}
{We have $h(G)|G|=\sum_{g\in G}|C_G(g)|$. Let $g\in G$. We denote the set of all vertices in $G$ which they are adjacent of $g$ in $\Gamma(G)$ by $Ed_G(g)$. We know that for all $x\in N_G(\langle g\rangle)$, we have  $o(xg)\mid lcm(o(x),o(g))$,  so

\begin{eqnarray*}
Deg(G)&=& \sum_{g\in G}(|Ed_G(g)|+1)\\&\geq& \sum_{g\in G}(|N_G(\langle g\rangle)|+1)\\&\geq& \sum_{g\in G}(|C_G(g)|+1)\\&=&|G|h(G)+|G|\\&=&|G|(h(G)+1).
\end{eqnarray*}

}
\end{proof}
Clearly, when $G$ is a abelian group, then $\Gamma(G)$ is a complete graph.
In this case $Deg(G)=|G|(|G|+1)=|G|(h(G)+1)$.
A natural  question is the following:
Is there exist  a non-abelian group $G$ such that $Deg(G)=|G|(|G|+1)=|G|(h(G)+1)$?

The answer to this question is no.
\begin{thm}
Let $G$ be a finite group. Then  $Deg(G)=|G|(h(G)+1)$ if and only if  $G$ is an abelian group.
\end{thm}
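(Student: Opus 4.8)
The plan is to collapse the equality to a single pointwise condition and then contradict non-abelianness inside a minimal non-abelian subgroup. The ``if'' direction is immediate: if $G$ is abelian then $o(xy)\mid lcm(o(x),o(y))$ for all $x,y$, so $\Gamma(G)$ is complete, every vertex has degree $|G|+1$, and $Deg(G)=|G|(|G|+1)=|G|(h(G)+1)$ since $h(G)=|G|$. For the converse I would start from the preceding lemma, but observe that only the centralizer bound is needed: since commuting elements are always adjacent, $C_G(g)\subseteq Ed_G(g)$ for every $g$, whence $Deg(G)=\sum_{g\in G}(|Ed_G(g)|+1)\ge\sum_{g\in G}(|C_G(g)|+1)=|G|(h(G)+1)$. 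Thus the assumed equality forces $Ed_G(g)=C_G(g)$ for all $g$, i.e.\ the $LCM$-graph coincides with the commuting graph. Equivalently, the hypothesis yields the key implication
\[
(\star)\qquad o(xy)\mid lcm(o(x),o(y))\ \Longrightarrow\ xy=yx\qquad(x,y\in G).
\]

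Next I would argue by contradiction. Suppose $G$ is non-abelian and let $M$ be a subgroup of least order among the non-abelian ones; then every proper subgroup of $M$ is abelian, so $M$ is minimal non-abelian, and $(\star)$ is inherited by $M$ since orders and commuting are computed inside $M$. It therefore suffices to exhibit a single non-commuting pair $a,b\in M$ with $o(ab)\mid lcm(o(a),o(b))$. If $M$ is nilpotent it must be a $p$-group (a single non-abelian Sylow factor would already be a proper non-abelian subgroup); otherwise all proper subgroups are nilpotent and $M$ is minimal non-nilpotent, so Theorem \ref{11} applies.

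If $M$ is a $p$-group, it is standard that $|M/Z(M)|=p^2$ and $M'=\langle z\rangle$ with $o(z)=p$ central, so $M$ has class $2$ and $(xy)^k=x^ky^k z^{\binom{k}{2}}$ for all $x,y\in M$. Taking any non-commuting $a,b$ and $k=lcm(o(a),o(b))=p^s$ gives $(ab)^k=z^{\binom{p^s}{2}}$. For $p$ odd one has $p\mid\binom{p^s}{2}$, so $(ab)^k=1$ and $o(ab)\mid k$. For $p=2$ I would first note that some non-commuting pair has an element of order $\ge4$: otherwise every element of order $\ge4$ is central, but two non-commuting involutions $a,b$ have $o(ab)$ a power of $2$ which is $\ge4$ (it cannot be $2$, as that would make them commute), so $ab$ would be central, giving $ab=ba$, a contradiction. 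With such a pair $s\ge2$, $\binom{2^s}{2}$ is even, and again $o(ab)\mid k$. Either way $(a,b)$ violates $(\star)$. If instead $M$ is non-nilpotent, Theorem \ref{11} gives $M=P\rtimes\langle y\rangle$ with $P\lhd M$ a Sylow $p$-subgroup and $\langle y\rangle$ a cyclic Sylow $q$-subgroup acting non-trivially; write $o(y)=q^{n}$. Minimality forces $P$ to be elementary abelian: if $\exp(P)>p$ then $\Omega_1(P)\langle y\rangle$ is a proper, hence abelian, subgroup, so $y$ centralises $\Omega_1(P)$, and a $p'$-automorphism trivial on $\Omega_1(P)$ is trivial on $P$, contradicting non-nilpotency. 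The same remark shows $y$ moves some $a\in P$ of order $p$, so $a,y$ do not commute; since $ay$ maps to a generator of $M/P\cong\langle y\rangle$, its order is a multiple of $q^{n}$, while $(ay)^{q^{n}}\in P$ has order dividing $p$, whence $o(ay)\mid pq^{n}=lcm(o(a),o(y))$. Once more $(\star)$ is violated.

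In every case we contradict $(\star)$, so $G$ must be abelian. The routine part is the reduction to $(\star)$; the substance lies in the case analysis on the minimal non-abelian subgroup, and I expect the main obstacle to be the $p=2$ subtlety in the $p$-group case (where the class-$2$ power formula drops a factor of $2$, so one must separately guarantee a non-commuting pair containing an element of order $\ge4$) together with establishing that the Sylow $p$-kernel $P$ is elementary abelian in the non-nilpotent case via the coprime-action argument on $\Omega_1(P)$.
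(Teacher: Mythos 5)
Your proposal is correct, and although it shares the paper's skeleton (force the $LCM$-graph to coincide with the commuting graph, then contradict non-abelianness in a minimal non-abelian configuration, split into the $p$-group case and the Schmidt-group case), the execution is genuinely different at every stage. First, the paper runs an induction on $|G|$, applying the hypothesis to maximal subgroups to conclude that $G$ itself is minimal non-abelian; you avoid induction altogether by observing that the condition $(\star)$, i.e.\ $Ed_G(g)=C_G(g)$ for all $g$, is inherited by arbitrary subgroups, and then working inside a minimal non-abelian subgroup $M$ --- a cleaner reduction. Second, in the $p$-group case the paper invokes regular $p$-groups, Mann's theorem on minimal irregular $p$-groups, powerful groups, and a GAP check for orders up to $16$; you use only the elementary R\'edei-type structure of minimal non-abelian $p$-groups ($M'=\langle z\rangle$ central of order $p$, class $2$) together with the class-$2$ power identity $(ab)^k=a^kb^k[b,a]^{\binom{k}{2}}$, plus a short involution argument to secure a non-commuting pair with an element of order at least $4$ when $p=2$. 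This is more elementary and self-contained, and it sidesteps a weak point of the paper's argument, which for $p=2$ appeals to ``all powerful groups are in $CP2$'' although the cited result on powerful groups is for odd $p$. Third, in the non-nilpotent case the paper derives a strict degree inequality for a generator of the cyclic complement, whereas you exhibit a single adjacent non-commuting pair $(a,y)$ after showing the kernel $P$ is elementary abelian. One small caveat there: the fact that a $p'$-automorphism acting trivially on $\Omega_1(P)$ is trivial on $P$ is \emph{not} true for arbitrary $2$-groups (the automorphism of order $3$ of $Q_8$ is a counterexample); it does hold when $p$ is odd or when $P$ is abelian, and in your situation $P$ is abelian because it is a proper subgroup of the minimal non-abelian group $M$, so the step is valid --- but you should state explicitly that this is what rescues the case $p=2$.
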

\begin{proof}{ If $G$ is an abelian group, then clearly,  $Deg(G)=|G|(|G|+1)=|G|(h(G)+1)$. So suppose that $Deg(G)=|G|(h(G)+1)$. We prove that
$G$ is an abelian group. We proceed by induction on $|G|$. 
The base of induction is trivial. Suppose for a contradiction that $G$ is not abelian group.
Let $H$ be a maximal   subgroup of $G$.
If there is $a\in H$ such that 
$Ed_H(a)\neq C_H(a)$, then  $Ed_G(a)\neq C_G(a)$, and so
\begin{eqnarray*}
Deg(G)&=& \sum_{g\in G}(|Ed_G(g)|+1)\\&\geq& \sum_{g\in G}(|N_G(\langle g\rangle)|+1)\\&>& \sum_{g\in G}(|C_G(g)|+1) \\&=&|G|(h(G)+1),
\end{eqnarray*}
 which is a contradiction.
So for any $a\in H$ we have 
$|Ed_G(a)|=|N_H(\langle a\rangle)|=| C_H(a)|$.
By induction hypothesis, 
$H$ is  abelian.  Since all proper subgroups of $G$ are abelian, $G$ is a minimal non-abelian group.  First suppose that $G$ is a $p$-group. If $G$ is a regular group, then $G\in CP2$, and so 
$\Gamma(G)$ is a complete graph. It follows that
$|Ed_G(g)|=|G|=|C_G(g)|$ for all $g\in G$, and so $G$ is an abelian group, which is a contradiction.
So suppose that $G$ is not a regular $p$-group.
Since all maximal subgroups of $G$ are abelian,  $G$ is minimal irregular group. Then by Theorem 3 of  \cite{man} $G=\langle a, b\rangle$, $exp(G')=p$ and  $Z(G)$ is cyclic of order $exp(G)/p$.
Let $M$ and $H$ be two distinct maximal subgroups of $G$.
Clearly,  $M\cap H\leq Z(G)$.  Then $G'\leq Z(G)$, and so
$G$ is a nilpotent group of class two. If $p>2$, then   $G$ is a regular group, which is a contradiction. 
So   $p=2$. 
If $|Z(G)|\leq 4$, then $|G|\leq 16$. A GAP computation shows that 
 $G$ is an abelian group. So suppose that 
 $|Z(G)|>4$. Then $exp(G)>4$. It follows that $G^4=\langle\{ x^4: x\in G\}\rangle\neq 1$. Since $G'\leq G^4\leq Z(G)$, we have $G$ is a powerful group. Since all powerful groups are in $CP2$, we have 
 $Deg(G)=|G|(|G|+1)=|G|(h(G)+1)$, which means that $G$ is abelian group, which is a contradiction. 
So suppose that $G$ is not a $p$-group. If $G$ is a nilpotent group, then 
all Sylow subgroups of $G$ are abelian, and so $G$ is abelian, which is a contradiction. 
So suppose that $G$ is not a nilpotent group.
Since $G$ is a minimal non-abelian group, by Theorem \ref{11}, $G=P\rtimes Q$ where $P\in Syl_p(G)$ and $Q=\langle a\rangle\in Syl_q(G)$ is 
a cyclic subgroup. Let $g\in G\setminus a^{-1}(P\setminus\{1\})$. Then   $o(ag)\nmid lcm(o(g),o(a))$, so  $|Ed_G(a)|+1=deg(a)=|G|-(|P|-1)+1>|N_G(\langle a\rangle)$.
 It follows that
\begin{eqnarray*}
Deg(G)&>& \sum_{g\in G}(|N_G(g)|+1)\\&\geq& \sum_{g\in G}(|C_G(g)|+1)\\&=&|G|(h(G)+1),
\end{eqnarray*}

which is a contradiction.}

\end{proof}

Let $G$ and $H$ be two finite groups. The following lemma shows that  $Deg(H\times G)\neq Deg(H)Deg(G)$, and so $Deg(\cdot)$ is not a multiplicative function, but    the valor of $Deg(H\times G)$ is bounded in terms of $Deg(H)$, $Deg(G)$, $|H|$ and $|G|$. 
\begin{lem}\label{ineq}
(a) Let $G$ and $H$ be two finite groups.
Then $$Deg(H\times G)\geq (Deg(G)-|G|)(Deg(H)-|H|)+|H||G|.$$
Also, if  $gcd(|H|,|G|)=1$, then 
$$Deg(H\times G)=(Deg(G)-|G|)(Deg(H)-|H|)+|H||G|.$$

(b) Let $L=G\rtimes H$ be a finite group where $G\in Syl_p(L)$ such that $G\in CP2$ and $H\leq L$. Then  $$Deg(H\times G)\geq Deg(L).$$
\end{lem}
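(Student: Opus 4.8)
The plan is to recast $Deg$ as a count of ordered pairs. For a finite group $K$ put
\[
A(K)=\{(x,y)\in K\times K : o(xy)\mid lcm(o(x),o(y))\}.
\]
Recall from the degree computation above that $deg(g)=|Ed_G(g)|+1$, where $Ed_G(g)$ is understood to contain $g$ (each vertex carries a loop, since $o(g^2)\mid o(g)$, and loops count twice), and that the relation is symmetric because $o(xy)=o(yx)$. Summing over $g$ gives $Deg(K)=|A(K)|+|K|$ for every finite $K$, so both parts reduce to estimates for $|A(\cdot)|$. I will use throughout that, since $o((h,g))=lcm(o(h),o(g))$, a pair $((h_1,g_1),(h_2,g_2))$ lies in $A(H\times G)$ exactly when $lcm(o(h_1h_2),o(g_1g_2))$ divides $lcm(o(h_1),o(h_2),o(g_1),o(g_2))$.

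For the inequality in (a), consider the bijection $\Phi$ from $(H\times H)\times(G\times G)$ onto $(H\times G)\times(H\times G)$ given by $((h_1,h_2),(g_1,g_2))\mapsto((h_1,g_1),(h_2,g_2))$. If $(h_1,h_2)\in A(H)$ and $(g_1,g_2)\in A(G)$, then both $o(h_1h_2)\mid lcm(o(h_1),o(h_2))$ and $o(g_1g_2)\mid lcm(o(g_1),o(g_2))$ divide $lcm(o(h_1),o(h_2),o(g_1),o(g_2))$, hence so does $lcm(o(h_1h_2),o(g_1g_2))$; thus $\Phi$ carries $A(H)\times A(G)$ into $A(H\times G)$ and $|A(H\times G)|\ge|A(H)|\,|A(G)|$. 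Adding $|H||G|$ and using $Deg(K)=|A(K)|+|K|$ gives the stated bound. When $gcd(|H|,|G|)=1$ I will show $\Phi$ is onto $A(H\times G)$: letting $\pi_H,\pi_G$ be the disjoint prime sets of $H,G$, the number $o(h_1h_2)$ is a $\pi_H$-number and $o(g_1g_2)$ a $\pi_G$-number, so $lcm(o(h_1h_2),o(g_1g_2))=o(h_1h_2)\,o(g_1g_2)$ and $lcm(o(h_1),o(h_2),o(g_1),o(g_2))=lcm(o(h_1),o(h_2))\cdot lcm(o(g_1),o(g_2))$; comparing $\pi_H$- and $\pi_G$-parts of the divisibility splits it into $o(h_1h_2)\mid lcm(o(h_1),o(h_2))$ and $o(g_1g_2)\mid lcm(o(g_1),o(g_2))$. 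Hence $A(H\times G)=\Phi(A(H)\times A(G))$ and equality holds.

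For (b), since $G\in Syl_p(L)$ is a full Sylow $p$-subgroup, $|H|=|L|/|G|$ is coprime to $p$; as $|G|$ is a power of $p$ this gives $gcd(|G|,|H|)=1$, so part (a) applies. Moreover $G$ is a $p$-group in $CP2$, so for $x,y\in G$ we have $o(xy)\le\max\{o(x),o(y)\}=lcm(o(x),o(y))$, and since $o(xy)$ is a $p$-power this forces $o(xy)\mid lcm(o(x),o(y))$; thus $\Gamma(G)$ is complete and $|A(G)|=|G|^2$. Therefore $Deg(H\times G)=|A(H)|\,|G|^2+|H||G|$, and it suffices to prove $|A(L)|\le|G|^2\,|A(H)|$. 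Let $\pi\colon L\to L/G\cong H$ be the quotient map and $\Pi\colon(u,v)\mapsto(\pi(u),\pi(v))$. I first claim $\Pi$ sends $A(L)$ into $A(H)$: if $(u,v)\in A(L)$ then $o(\pi(u)\pi(v))=o(\pi(uv))\mid o(uv)\mid lcm(o(u),o(v))$, and as $o(\pi(u)\pi(v))$ is a $p'$-number it divides the $p'$-part of $lcm(o(u),o(v))$, which is $lcm(o(\pi(u)),o(\pi(v)))$ because the $p'$-part of $o(u)$ equals $o(\pi(u))$. Partitioning $A(L)$ by $\Pi$, each fibre over $(h_1,h_2)\in A(H)$ lies among the $|G|^2$ pairs $(u,v)$ with $\pi(u)=h_1$ and $\pi(v)=h_2$, so $|A(L)|\le|G|^2\,|A(H)|$ and hence $Deg(L)\le Deg(H\times G)$.

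The forward divisibilities are routine; the delicate point is the well-definedness of $\Pi$ in (b), which rests on the identity that the $p'$-part of $o(u)$ equals $o(\pi(u))$. I will prove it by noting that $o(\pi(u))\mid o(u)$ is a $p'$-number, while $u^{o(\pi(u))}\in\ker\pi=G$ has $p$-power order, so $o(u)$ divides $o(\pi(u))$ times a power of $p$; this pins the $p'$-part of $o(u)$ to $o(\pi(u))$. Without this identity one could only bound $|A(L)|$ by $|G|^2|H|^2$, which is far too weak. It is precisely the coprimality $gcd(|G|,|H|)=1$ forced by the Sylow hypothesis that makes the $p$-/$p'$-bookkeeping collapse and yields the sharp factor $|A(H)|$, so I expect this step to be the main obstacle.
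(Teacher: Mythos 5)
Your proof is correct. For part (a) it is essentially the paper's argument in different clothing: the paper works vertex by vertex, showing that componentwise adjacency forces adjacency in $H\times G$ and summing the resulting bound $deg((a,g))\geq (deg_H(a)-1)(deg_G(g)-1)+1$, while you count the symmetric set of ordered pairs $A(K)$ and use the identity $Deg(K)=|A(K)|+|K|$; the divisibility computations, including the splitting into $\pi_H$- and $\pi_G$-parts in the coprime case, are identical, so your reformulation is just cleaner bookkeeping. For part (b), however, you do something genuinely different --- and more complete than the source. The paper's own proof of (b) only shows that every $g\in G$ and $h\in H$ are adjacent in $\Gamma(L)$ (via the $CP2$ computation $(hg)^{o(h)}=g^{h^{-1}}g^{h^{-2}}\cdots g^{h^{-o(h)}}$) and then repeats verbatim the part (a) lower bound for $Deg(H\times G)$; it never produces an upper bound for $Deg(L)$, so the comparison $Deg(H\times G)\geq Deg(L)$ is not actually carried out there. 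Your argument supplies exactly the missing half: from the Sylow hypothesis you get $\gcd(|G|,|H|)=1$; from $G\in CP2$ being a $p$-group you get that $\Gamma(G)$ is complete, so $|A(G)|=|G|^2$ and part (a) gives $Deg(H\times G)=|G|^2|A(H)|+|G||H|$; and you bound $|A(L)|\leq |G|^2|A(H)|$ by pushing adjacent pairs down the quotient $\pi\colon L\to L/G\cong H$ and counting fibres, where the key identity --- that the $p'$-part of $o(u)$ equals $o(\pi(u))$ because $u^{o(\pi(u))}$ lies in the $p$-group $G$ --- is proved correctly and is what makes $\Pi(A(L))\subseteq A(H)$ work. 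One small remark: you do not even need the equality case of (a) here, since the inequality $Deg(H\times G)\geq |A(G)||A(H)|+|G||H|$ with $|A(G)|=|G|^2$ already matches your upper bound $Deg(L)\leq |G|^2|A(H)|+|G||H|$.
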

\begin{proof}
{
Let $g$ and $y$ be adjacent in $\Gamma(G)$ and   $a$ and $b$ be adjacent in $\Gamma(H)$. Then $o(gy)\mid lcm(o(g),o(y))$ and 
$o(ab)\mid lcm(o(a),o(b))$. We have 
\begin{eqnarray*}
o((a,g)(b,y))&=& lcm(o(ab),o(gy))\\&\mid& lcm(lcm(o(a),o(b)),lcm(o(g),o(y)))\\&=&lcm(o((a,g)),o((b,y)))
\end{eqnarray*}
That means, $(a,g)$ and $(b,y)$ are connected.
Let $\{y_1,\ldots ,y_k\}$ and $\{b_1,\ldots ,b_t\}$ be the set of all elements of $G$ and $H$ which are connected to 
$g$ and $a$, respectively.
Clearly, $deg(g)=k+1$ and $deg(a)=t+1$.
It follows that $deg((a,g))\geq ((deg_H(a)-1)\cdot (deg_G(g)-1))+1$.
Hence
\begin{eqnarray*}
Deg(H\times G)&=&\sum_{(a,g)\in H\times G}deg((a,g))\\&\geq&
\sum_{(a,g)\in H\times G}((deg_H(a)-1)\cdot (deg_H(g)-1))+1\\&=&
Deg(H)Deg(G)-|G|Deg(H)-|H|Deg(G)+2|H||G|.
\end{eqnarray*}

If $gcd(|G|,|H|)=1$.
  Let $(b,y)\in H\times G$ such that
$(a,g)$ and $(b,y)$ are connected.
Then    $o((a,g)(b,y))=o((ab,gy))\mid lcm(o((a,g)),o((b,y)))$.
Since $gcd(|H|,|G|)=1$, we have  $o(gy)\mid lcm( o(g),o(y))$ and $o(ab)\mid lcm( o(a),o(b))$. Consequently,
 $g$ and $y$ are connected. Also, $a$ and $b$ are connected.
 So $deg(a,g)\leq (deg_H(a)-1)\cdot (deg_G(g)-1)+1$. It follows that 
 \begin{eqnarray*}
Deg(H\times G)&=&\sum_{(a,g)\in H\times G}deg((a,g))\\&\leq&
\sum_{(a,g)\in H\times G}(deg_H(a)-1)\cdot (deg_G(g)-1)+1\\&=&
Deg(H)Deg(G)-|G|Deg(H)-|H|Deg(G)+2|H||G|.
\end{eqnarray*}
 
 Consequently,  $Deg(H\times G)=Deg(H)Deg(G)-|G|Deg(H)-|H|Deg(G)+2|H||G|$.

(b) Let $g\in G$ and $h\in H$.  We have $(hg)^{o(h)}=g^{h^{-1}}g^{h^{-2}}\ldots g^{h^{-o(h)}}$. Since $G\in CP2$, we have 
$(g^{h^{-1}}g^{h^{-2}}\ldots g^{h^{-o(h)}})^{o(g)}=1$, and so
$o(gh)\mid o(g)o(h)=lcm(o(g),o(h))$.

Let $g$ and $y$ be adjacent in $\Gamma(G)$ and   $a$ and $b$ be adjacent in $\Gamma(H)$. Then $o(gy)\mid lcm(o(g),o(y))$ and 
$o(ab)\mid lcm(o(a),o(b))$. We have 
\begin{eqnarray*}
o((a,g)(b,y))&=& lcm(o(ab),o(gy))\\&\mid& lcm(lcm(o(a),o(b)),lcm(o(g),o(y)))\\&=&lcm(o((a,g)),o((b,y)))
\end{eqnarray*}
That means, $(a,g)$ and $(b,y)$ are connected.
Let $\{y_1,\ldots ,y_k\}$ and $\{b_1,\ldots ,b_t\}$ be the set of all elements of $G$ and $H$ which are connected to 
$g$ and $a$, respectively.
Clearly, $deg(g)=k+1$ and $deg(a)=t+1$.
It follows that $deg((a,g))\geq ((deg_H(a)-1)\cdot (deg_G(g)-1))+1$.
Hence
\begin{eqnarray*}
Deg(H\times G)&=&\sum_{(a,g)\in H\times G}deg((a,g))\\&\geq&
\sum_{(a,g)\in H\times G}((deg_H(a)-1)\cdot (deg_H(g)-1))+1\\&=&
Deg(H)Deg(G)-|G|Deg(H)-|H|Deg(G)+2|H||G|.
\end{eqnarray*}

}
\end{proof}

In general,  if $Deg(H\times G)=(Deg(G)-|G|)(Deg(H)-|H|)+|H||G|$, then   
the equality $gcd(|H|,|G|)=1$ is not hold.
For example, Let $H=G=C_2$.
Then $Deg(C_2\times C_2)=4\cdot 5=20$.
Also, $(Deg(G)-|G|)(Deg(H)-|H|)+|H||G|=16+4=20$.
\begin{cor}\label{ineq2}
Let $H$ and $G$ be two  finite groups of order $n$,and let $A$ be a finite group such that $gcd(|A|,n)=1$. Then 
$Deg(A\times H)\leq Deg(A\times G)$ if and only if $Deg(H)\leq Deg(G)$.
\end{cor}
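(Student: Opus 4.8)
The plan is to reduce the statement to the exact (coprime) case of Lemma \ref{ineq}(a) and then to observe that the relevant ``coupling constant'' $Deg(A)-|A|$ is strictly positive. Since $\gcd(|A|,n)=1$ and $|H|=|G|=n$, the hypotheses $\gcd(|A|,|H|)=1$ and $\gcd(|A|,|G|)=1$ both hold, so the coprime formula of Lemma \ref{ineq}(a) applies to each of the two direct products. Because that formula is symmetric in its two factors, I may write it with $A$ as one factor. Setting $c:=Deg(A)-|A|$, this gives
$$Deg(A\times H)=c\,(Deg(H)-n)+|A|n,\qquad Deg(A\times G)=c\,(Deg(G)-n)+|A|n.$$

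Subtracting the two expressions, the additive term $|A|n$ and the term $-cn$ are common to both and cancel, leaving the clean identity
$$Deg(A\times G)-Deg(A\times H)=c\,\bigl(Deg(G)-Deg(H)\bigr).$$
Thus the entire equivalence is controlled by the sign of $c=Deg(A)-|A|$. To pin this down I would invoke the lower bound $|A|(h(A)+1)\le Deg(A)$ established earlier in this section, which yields
$$c=Deg(A)-|A|\ge |A|(h(A)+1)-|A|=|A|\,h(A)\ge 1>0,$$
since every finite group has at least one element and at least one conjugacy class. With $c>0$, the displayed identity shows that $Deg(A\times G)-Deg(A\times H)$ has the same sign as $Deg(G)-Deg(H)$, so $Deg(A\times H)\le Deg(A\times G)$ holds if and only if $Deg(H)\le Deg(G)$, which is exactly the claim.

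The argument is essentially mechanical once Lemma \ref{ineq}(a) is available: the only genuine content is the strict positivity of $c$, which is what upgrades the coprime product formula from a one-way comparison into a true ``if and only if''. I therefore expect no real obstacle beyond correctly matching the roles of the two factors in the symmetric coprime formula and citing the earlier lower bound to guarantee $c>0$.
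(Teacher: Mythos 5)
Your proof is correct and follows essentially the same route as the paper: both apply the coprime equality case of Lemma \ref{ineq}(a) to $A\times H$ and $A\times G$ and then compare the two resulting expressions. In fact you are slightly more careful than the paper, which never explicitly checks that the coefficient $Deg(A)-|A|$ is strictly positive (this is genuinely needed for the ``only if'' direction); your appeal to the earlier lower bound $Deg(A)\geq |A|(h(A)+1)$ closes that small gap.
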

\begin{proof}
{ By Lemma \ref{ineq}, we have 
$Deg(A\times G)=(Deg(G)-n)(Deg(A)-|A|)+n \cdot |A|$ and
$Deg(A\times H)=(Deg(H)-n)(Deg(A)-|A|)+n \cdot |A|$.
 Then 
$Deg(A\times H)\leq Deg(A\times G)$ if and only if $Deg(H)\leq Deg(G)$. 

}
\end{proof}
Let $Gr_2(n)$ be the set of all groups $G\in Gr(n)$ in which  $Deg(G)< n(n+1)$.
The other interesting question is the following: Let $G\in Gr_2(n)$ such that $Deg(H)\leq Deg(G)$ for all $H\in Gr_2(n)$. What can we say about the structure of $G$?

We answer this question in the case $n$ is a square free integer. 

The following  lemma is useful to prove Lemma \ref{square}.
\begin{lem}\label{in1}
Let $0\leq a\leq b\leq n$ be   integer numbers.
Then \begin{equation}\label{equ22}
a(n+1)+(n-a)(n-a+1)\leq b(n+1)+(n-b)(n-b+1)
\end{equation}

\end{lem}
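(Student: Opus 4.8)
The plan is to treat the two sides as values of a single function and reduce the claim to a sign computation. Define $f(x)=x(n+1)+(n-x)(n-x+1)$ for $0\le x\le n$, so that the asserted inequality is exactly $f(a)\le f(b)$. Since $a\le b$, it suffices to control the difference $f(b)-f(a)$, and the cleanest route is to compute it in closed form rather than to argue about monotonicity abstractly. First I would write $g(t)=t(t+1)$, so that $f(x)=x(n+1)+g(n-x)$ and
$$f(b)-f(a)=(b-a)(n+1)+\bigl(g(n-b)-g(n-a)\bigr).$$

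The second difference is handled by the elementary identity $g(s)-g(t)=(s-t)(s+t+1)$, applied with $s=n-b$ and $t=n-a$, which gives $g(n-b)-g(n-a)=-(b-a)(2n-a-b+1)$. Substituting this and extracting the common factor $(b-a)$ yields the single governing identity
$$f(b)-f(a)=(b-a)\bigl[(n+1)-(2n-a-b+1)\bigr]=(b-a)(a+b-n).$$
This is the heart of the argument, and everything reduces to reading off the sign of the right-hand side.

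The step I expect to be the real obstacle is precisely this sign analysis, because the factorization shows that the inequality does not follow from $0\le a\le b\le n$ alone. Indeed $f$ is strictly convex and symmetric about $x=n/2$, hence decreasing on $[0,n/2]$, so $f(a)\le f(b)$ holds iff $a+b\ge n$; for small values such as $a=0,\ b=1$ the inequality in fact reverses. Thus the proof is completed by $(b-a)\ge 0$ together with $a+b\ge n$, and the load-bearing point is to verify that the hypotheses in force when this lemma is invoked (in the setup leading to Lemma~\ref{square}, where $a$ and $b$ record sizes of cliques lying in the upper half of the range) do guarantee $a+b\ge n$. Granting that, the factorization above makes both factors nonnegative and closes the argument immediately.
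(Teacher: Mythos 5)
Your factorization is right, and it uncovers a genuine defect: the lemma as stated is false. Writing $f(x)=x(n+1)+(n-x)(n-x+1)=n^2+n+x(x-n)$, one gets exactly your identity $f(b)-f(a)=(b-a)(a+b-n)$, so the inequality holds if and only if $a=b$ or $a+b\ge n$; for $n=2$, $a=0$, $b=1$ the two sides are $6$ and $5$, so \eqref{equ22} fails. Accordingly, the paper's own argument (an induction on $b$) cannot be, and is not, correct: it rewrites the claim as $(b-1-a)(n+1)+(n-(b-1))(n-b+1)-(n-a)(n-a+1)+(n+1-(n-b+1))\ge 0$ and then asserts that the sum of the first three terms is nonnegative ``by induction hypothesis.'' But the statement of the lemma for $b-1$ involves $(n-(b-1))(n-(b-1)+1)=(n-b+1)(n-b+2)$, not $(n-(b-1))(n-b+1)=(n-b+1)^2$: a term $(n-b+1)$ has been silently dropped, and the genuine induction hypothesis only yields that those three terms are $\ge -(n-b+1)$. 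Since the true increment is $f(b)-f(b-1)=2b-n-1$, which is negative whenever $b\le n/2$, no rearrangement can rescue the induction; your observation that $f$ decreases on $[0,n/2]$ is exactly the obstruction.

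Where your own proposal has a gap is in its final, load-bearing assumption. The paper invokes this lemma in the proof of Lemma \ref{square}(a) with $a=|Fit(G)|$, $b=|Fit(H)|$ and $n=|G|=|H|$ (orders of Fitting subgroups, not cliques ``in the upper half of the range''), under the hypotheses that $n$ is square-free, $G$ is Frobenius and $|Fit(G)|\le|Fit(H)|$. These hypotheses do not force $a+b\ge n$: for square-free $n$ one only knows $|Fit(G)|,|Fit(H)|>\sqrt{n}$, and, for instance, with $n=903=3\cdot 7\cdot 43$ the groups $G=C_{43}\rtimes C_{21}$ (Frobenius, $a=43$) and $H=C_{7}\times(C_{43}\rtimes C_{3})$ ($b=301$) satisfy every hypothesis while $a+b=344<n$. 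So your corrected statement (adding the hypothesis $a+b\ge n$, or $a=b$) is the most one can prove, but it does not cover the paper's application; the defect therefore propagates to Lemma \ref{square}(a) and to Theorem \ref{sq2}, whose proofs would need a different argument or stronger hypotheses.
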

\begin{proof}
{We proceed by induction on $b$. Induction start trivially whenever
$b=a$. The equation \ref{equ22}, is equal to
$$(b-a)(n+1)+ (n-b)(n-b+1)-(n-a)(n-a+1)\geq 0$$
which is equal to 
$$(b-1-a)(n+1)+ (n-(b-1))(n-b+1)-(n-a)(n-a+1)+(n+1-(n-b+1))\geq 0.$$
By induction hypothesis, we have 
$$(b-1-a)(n+1)+ (n-(b-1))(n-b+1)-(n-a)(n-a+1)\geq 0.$$
Since $n+1-(n-b+1)\geq 0$, we have 
$$(b-a)(n+1)+ (n-b)(n-b+1)-(n-a)(n-a+1)\geq 0.$$
}
\end{proof}
\begin{lem}\label{square}
Let $G$ and $H$ be two groups of order $n$ where $n=p_1p_2\ldots p_k$ where $p_1<\ldots <p_k$ are primes.

(a) Suppose that $G$ is a Frobenius group.  If  $|Fit(G)|\leq |Fit(H)|$, then    $Deg(H)\geq Deg(G)$.

(b) If $p_1\mid p_2-1$ and $G$ is not abelian, then $Deg(G)\leq Deg(C_{n/p_2p_1}\times (C_{p_2}\rtimes C_{p_1}))$.
\end{lem}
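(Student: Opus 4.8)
The plan is to recast $Deg$ as a non-edge count and then evaluate that count in closed form for the groups at hand. Every vertex $g$ carries a loop, since $o(g\cdot g)=o(g^2)\mid o(g)$, and a missing edge between two distinct vertices lowers the degree sum by exactly $2$; hence $Deg(G)=n(n+1)-2M(G)$, where $M(G)$ denotes the number of unordered pairs $\{x,y\}$ of distinct elements with $o(xy)\nmid lcm(o(x),o(y))$. Maximizing $Deg$ is therefore the same as minimizing $M$, and both parts of the statement become problems about counting non-adjacent pairs.

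For part (a) I would compute $M(G)$ exactly for a Frobenius group $G=F\rtimes C$ with $F=Fit(G)$ of order $f$ and $|C|=c=n/f$. Two facts drive the count. First, $F$ is cyclic, being a nilpotent group of square-free order. Second, every element lying outside $F$ is conjugate to an element of a complement, so an element whose image in $G/F\cong C$ is $s$ has order $o(s)$. From these one checks that every element of $F$ is adjacent to every element of $G$, and that two elements outside $F$ fail to be adjacent precisely when their images in $C$ are mutually inverse while their product lies in $F\setminus\{1\}$: then the product has order dividing $f$ whereas the lcm divides $c$, and $\gcd(f,c)=1$ forces non-adjacency. Counting these pairs fibrewise over the projection $G\to C$, and treating the unique involution fibre separately when $c$ is even, collapses to $M(G)=\tfrac12(n-f)(f-1)$, so that $Deg(G)=n(n+1)-(n-f)(f-1)$. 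Comparing two such groups then reduces to a one-variable monotonicity inequality in $f$, which is exactly the kind of statement isolated in Lemma~\ref{in1}.

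For part (b) I would first use that a non-abelian group $G$ of square-free order is a $Z$-group, and so decomposes as $G\cong A\times G_0$ with $A$ a cyclic central factor of order coprime to $|G_0|$ and $G_0$ a Frobenius group whose kernel has no central direct factor. The target $W=C_{n/p_rp_i}\times(C_{p_r}\rtimes C_{p_i})$ has the same shape, with $G_0$ replaced by the minimal non-abelian Frobenius core $C_{p_r}\rtimes C_{p_i}$ that the hypothesis $p_i\mid p_r-1$ makes available. The product and coprimality identities of Lemma~\ref{ineq} and Corollary~\ref{ineq2} let me trade a comparison of the full groups for a comparison of their cores, after which part (a) and Lemma~\ref{in1} finish the job once the orders of the Fitting subgroups are matched.

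The step I expect to be the main obstacle is precisely the monotonicity comparison whenever the second group is not itself Frobenius. The closed form $Deg=n(n+1)-(n-f)(f-1)$ relies on the fixed-point-free action of a Frobenius complement; for a general square-free group the non-edge count depends not merely on $|Fit|$ but on how each subgroup of the complement fixes individual prime parts of the kernel, so that two groups with equal Fitting order can have different $Deg$. The crux is thus to show that, among square-free groups with a prescribed Fitting order, the non-edge count is extremized by the Frobenius (respectively central-times-Frobenius) configuration, and that the resulting one-variable bound varies monotonically in the sense of Lemma~\ref{in1}; carrying out this reduction, rather than the fibrewise count in the Frobenius case, is where the genuine difficulty lies.
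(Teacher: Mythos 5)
Your non-edge reformulation and your closed formula $Deg(G)=n(n+1)-(n-f)(f-1)$ for a Frobenius group with kernel of order $f$ are correct, and they mirror the paper's own strategy (exact count for the Frobenius group, a lower bound for $H$ in terms of $|Fit(H)|$, then Lemma \ref{in1}). But the step you defer as ``a one-variable monotonicity inequality in $f$, which is exactly the kind of statement isolated in Lemma~\ref{in1}'' is not merely the hard part: it is false, and so is the statement you are trying to prove. Your own formula shows why: $\phi(f)=n(n+1)-(n-f)(f-1)$ is a quadratic in $f$ with positive leading coefficient, minimized near $f=(n+1)/2$, hence not monotone in $f$. (Lemma \ref{in1} itself is false as stated: $a(n+1)+(n-a)(n-a+1)=a^2-an+n(n+1)$ decreases for $a<n/2$; the paper's induction hides a sign error, since the increment from $b-1$ to $b$ equals $2b-n-1$, negative for $b<(n+1)/2$.) Concretely, take $n=42$. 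The Frobenius group $G=C_7\rtimes C_6$ has $Fit(G)=C_7$ and, by your formula, $Deg(G)=1806-35\cdot 6=1596$; the Frobenius group $H=D_{21}=C_{21}\rtimes C_2$ has $Fit(H)=C_{21}$ and $Deg(H)=1806-21\cdot 20=1386$. So $|Fit(G)|=7\le 21=|Fit(H)|$ yet $Deg(H)<Deg(G)$, refuting part (a). Part (b) fails on the same order: here $p_1=2$, $p_2=3$, the comparison group is $C_7\times(C_3\rtimes C_2)=C_7\times S_3$, which has exactly $147$ non-edges (the pairs of elements whose $S_3$-components are distinct transpositions), so $Deg(C_7\times S_3)=1806-294=1512<1596=Deg(C_7\rtimes C_6)$, although $C_7\rtimes C_6$ is non-abelian. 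No completion of your outline -- nor of the paper's own proof, which rests on the same false monotonicity -- can exist; to your credit, you flagged precisely the step where everything collapses, whereas the paper asserts it.

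A secondary error in your part (b): a non-abelian group of square-free order need not split as (central cyclic) $\times$ (Frobenius). For instance $(C_7\rtimes C_3)\times(C_{11}\rtimes C_5)$ has trivial center and is not Frobenius, because the complement $C_{15}$ does not act fixed-point-freely on the kernel $C_{77}$ (the element of order $3$ centralizes $C_{11}$). The correct structure (H\"older) is $C_m\rtimes C_k$, where distinct prime components of the kernel may have distinct nontrivial stabilizers in the complement; this is exactly why $Deg(H)$ is not a function of $|Fit(H)|$ alone, and why any comparison argument keyed only to the Fitting order -- yours or the paper's -- cannot work.
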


\begin{proof}
{Clearly, we may assume that $G\neq Fit(G)$. We have $G=P_k\rtimes (E\times \langle a\rangle)$ and  $H=Q_k\rtimes (F\times \langle b\rangle)$ where 
$|P_k|=p_k=|Q_k|$. Let $Fit(G)=\langle x\rangle$ and $Fit(H)=\langle y\rangle$. Let $z\in Fit(G)$. For all $t\in G$, we have 
$o(zt)\mid lcm(o(z),o(t))$, so $Fit(G)=LC(G)$.
Let $u\in \langle a\rangle\setminus 1$.
If $t\not\in u^{-1}(Fit(G)\setminus \{1\})$, then 
$o(ut)\mid lcm(o(u),o(t))$, and so 
$deg(u)=|G|-|Fit(G)|$. It follows that $$Deg(G)=|Fit(G)|(|G|+1)+(|G|-|Fit(G)|)(|G|-|Fit(G)|).$$ 
Let $z\in Fit(H)$. For all $t\in H$, we have 
$o(zt)\mid lcm(o(z),o(t))$, so $Fit(H)=LC(H)$.
Let $u\in \langle b\rangle\setminus 1$.
If $t\not\in u^{-1}(Fit(H)\setminus C_{Fit(H)}(u)$, then 
$o(ut)\mid lcm(o(u),o(t))$, and so 
$$deg(u)=|H|-(|Fit(H)|\setminus C_{Fit(H)}(u))+1\geq |H|-|Fit(H)|.$$
It follows that 
$$Deg(H)\geq |Fit(H)|(|G|+1)+(|G|-|Fit(H)|)(|G|-|Fit(H)|).$$

By Lemma \ref{in1},  $Deg(G)\leq Deg(H)$.

(b) Let $S=C_{n/p_2p_1}\times (C_{p_2}\rtimes C_{p_1})$. We have $G=Fit(G)\rtimes \langle a\rangle.$ 
Let $u\in \langle a\rangle\setminus 1$.
If $t\not\in u^{-1}(Fit(G)\setminus C_{Fit(H)}(u))$, then 
$o(ut)\mid lcm(o(u),o(t))$, and so 
$deg(u)=|G|-(|Fit(G)|-C_{Fit(H)}(u))+1\leq |G|-\frac{n}{p_1}+1$.
Clearly, if $y\in S\setminus Fit(S)$, then 
$deg(y)=|G|-\frac{n}{p_1}+1$.
Since $|G\setminus Fit(G)|\geq |H\setminus Fit(H)|$, we have
$Deg(G)\leq Deg(H).$ 

}
\end{proof}
\begin{thm}\label{sq2}
Let $G$ be a group of order  $n=p_1p_2\ldots p_k$ where $p_1<\ldots <p_k$ are primes. 
Let $p_i$ be the smallest prime divisor of $n$ such that $p_i\mid p_j-1$ for some prime divisor $p_j$ of $n$.
Then  $$Deg(G)\leq Deg(C_{n/p_rp_i}\times (C_{p_r}\rtimes C_{p_i}))$$
where $p_r=min\{p_j\mid n:  p_i\mid p_j-1\}$.
\end{thm}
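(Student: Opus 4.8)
The plan is to convert the degree sum into a count of missing edges and then reduce to the Frobenius situation already analysed in Lemma \ref{square}. Since $n$ is squarefree, every Sylow subgroup of $G$ is cyclic of prime order, so $G$ is supersolvable and may be written $G=K\rtimes C$ with $K=Fit(G)$ cyclic and $C$ a cyclic complement of order coprime to $|K|$. Each Sylow subgroup of $K$ is cyclic and hence lies in $CP2$, and $K$ is a normal Hall subgroup, so Corollary \ref{hall} yields $Fit(G)=LC(G)$ and shows that every element of $K$ is adjacent to every vertex of $\Gamma(G)$. Writing $B(G)$ for the number of ordered pairs $(x,y)$ with $o(xy)\nmid lcm(o(x),o(y))$, one has $deg(g)=n+1$ minus the number of non-neighbours of $g$, whence $Deg(G)=n(n+1)-B(G)$. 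Thus maximizing $Deg(G)$ is exactly minimizing $B(G)$, and by the previous remark every bad pair lies inside $G\setminus K$.

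I would next pin down $B$ for the relevant models. For a Frobenius group $K\rtimes C$ of squarefree order with $C$ cyclic, a non-kernel element $u$ has the same order as its image in $C$, and whenever $t$ and $ut$ are both non-kernel the product order is computed in the abelian group $C$, so $o(ut)\mid lcm(o(u),o(t))$; the only bad partners of $u$ are therefore the $|K|-1$ elements $t$ with $ut\in K\setminus\{1\}$. Hence $B(K\rtimes C)=(n-f)(f-1)$ with $f=|K|$, which is the content behind Lemma \ref{square} and feeds directly into Lemma \ref{in1}. The candidate extremal group $S=C_{m}\times(C_{p_r}\rtimes C_{p_i})$, with $m=n/(p_rp_i)$, has $Fit(S)=C_{n/p_i}$, and since its central factor contributes no bad pairs while coprimality keeps the two parts independent, $B(S)=m^{2}\,p_r(p_i-1)(p_r-1)$. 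The theorem is then the inequality $B(G)\ge B(S)$ for every non-abelian $G$ of order $n$ (the abelian groups give the complete graph and are excluded).

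To establish this I would proceed in two stages. By the minimality in the choice of $p_i$, none of $p_1,\dots,p_{i-1}$ divides any $p_j-1$, so the corresponding Sylow subgroups of $G$ can act nontrivially on nothing and split off as central cyclic direct factors; this peels $G$ down to the part that actually carries the twist and places us in the setting of Lemma \ref{square}(b), now read with $p_i,p_r$ in the roles of $p_1,p_2$. Since $G$ is non-abelian some prime must act, forcing $|C|\ge p_i$ and hence $|Fit(G)|\le n/p_i=|Fit(S)|$, while the cheapest admissible twist allowed by the divisibility constraints is precisely the factor $C_{p_r}\rtimes C_{p_i}$. I would then combine the Frobenius count with the monotonicity recorded in Lemma \ref{in1}, applied on the range $f\le n/p_i$, to conclude $Deg(G)\le Deg(S)$.

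The step I expect to be the main obstacle is exactly this comparison, because the quadratic $f\mapsto f(n+1)+(n-f)(n-f+2)$ governing the Frobenius case is not monotone across the full range, and two groups with the same value of $|Fit(G)|$ can have very different bad-pair counts. The genuinely dangerous configurations are those in which a large cyclic complement acts faithfully on a single small kernel prime, i.e. groups of the shape $C_q\rtimes C_{n/q}$ with $q$ small: such a group has an unusually small Fitting subgroup and its bad-pair count $(n-q)(q-1)$ can be quite small, so one must show carefully that it still cannot drop below $B(S)$. Handling these small-kernel Frobenius groups, and verifying that spreading the action over a larger complement never produces fewer bad pairs than concentrating it in the single minimal factor $C_{p_r}\rtimes C_{p_i}$, is the crux; getting the direction and the admissible range of the monotonicity in Lemma \ref{in1} exactly right is where the real work lies.
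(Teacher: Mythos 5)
Your proposal is correct and follows essentially the same route as the paper: by minimality of $p_i$, the Sylow subgroups for the primes below $p_i$ split off as a central direct factor of coprime order, Corollary \ref{ineq2} reduces the problem to the complementary part, and the conclusion is then exactly Lemma \ref{square}(b) (read with $p_i,p_r$ in place of $p_1,p_2$, as the paper also implicitly does). Your bad-pair count $Deg(G)=n(n+1)-B(G)$ and the explicit Frobenius computations are just a reformulation of the degree counts already inside Lemmas \ref{in1} and \ref{square}, and the ``crux'' you flag at the end is precisely the content the paper delegates to Lemma \ref{square}, not an additional step missing from the derivation of the theorem itself.
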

\begin{proof}
{We have $G=T\times F$ where $T\leq Z(G)$ and for all
prime divisors  $p$ of $n$ which $p<p_i$, we have $p\mid |T|$.
 By Corollary  \ref{ineq2}, we may assume that $T=1$. Hence the proof is clear by Lemma \ref{square} (b).

}
\end{proof}
Let   $n=p_1^{\alpha_1}\ldots p_k^{\alpha_k}$ where $p_1<\ldots <p_k$ are primes   such that $k>1$ and $p_1\mid p_2-1$. Let $X$ be the set of all minimal non-nilpotent groups of order $d$ where $d\mid n.$ We denote by $A(n)$
to be a group in $X$ of minimal order i. e.  $|A(n)|\leq |T|$ for all $T\in X$. 
In view of Theorem \ref{sq2}, we believe  that the following problem is true. 
\begin{que}
(a) Let   $n=p_1^{\alpha_1}\ldots p_k^{\alpha_k}$ where $p_1<\ldots <p_k$ are primes   such that $k>1$ and $p_1\mid p_2-1$. 
Let $G=C_{n/|A(n)|}\times A$.
Let $H$ be a non-nilpotent group of order 
$n$. Is it true that $Deg(H)\leq Deg(G)$?

(b) Let   $n=p_1^{\alpha_1}\ldots p_k^{\alpha_k}$ where $p_1<\ldots <p_k$ are primes  such that $\alpha_1\geq p^{p+1}$. 
Let $G=C_{n/p_2p_1}\times (C_{p_1}\wr C_{p_1})$.
Let $H\in Gr_2(n)$ be a nilpotent group. Is it true that $Deg(H)\leq Deg(G)$?

\end{que}
For a finite group $G$ and natural number $n$, set $ G(n)=\Omega_n(G)$  and define the type of $G$ to be the function whose value at $n$ is the
order of $G(n)$. Is it true that a group is soluble if its type is the same as that of a
soluble one? 
This is  the unsolved Problem 12.37 of the \cite{kok}.
The part (a) of the following question is similar to  this question with respect to the $LCM$-graph of a finite group.
\begin{que}\label{pro} Let $G,H\in Gr(n)$.

(a)  Suppose that  $G$ is a solvable group.  
  Is it true that 
if $\Gamma(G)\cong \Gamma(H)$, then $H$ is a solvable group?

(b) Suppose that  $G$ is a simple group. 
 Is it true that 
if $\Gamma(G)\cong \Gamma(H)$, then $G\cong H$?

\end{que}

Let  $G,S\in Gr(n)$ such that $S$ is a simple group. Is it   true  that  $Deg(S)\leq Deg(G)$? The answer is no.
For example, let $S=A_6$ and  let $G=(C_{18} \times D_{10}) \rtimes  C_2$ which is obtained by the following order in  GAP computation:

g:=AllSmallGroups(360,IsNilpotent,false);;

G:=g[11];

Then $Deg(S)=70560>Deg(G)=64800.$ 
So, if $Deg(H)$ is  minimum in the set of   all groups  of the same order $n$, then  $H$ could be a solvable group. 
Also, there is a solvable group $G$ of order 180 such that
$Deg(G)=Deg(GL(2,4))$.

 Another interesting   question is the following:
 \begin{que}Let $\Delta$ be a graph with $n$ vertices. Let $G(\Delta)=\{H\in Gr(n): \Gamma(H)\cong \Delta\}$. What can we say about  the structure of group $H$ in $G(\Delta)$ with respect to $\Delta$?

\end{que}

For example, let $\Delta$ be a graph with $4$ vertices such that $\Delta$ is not a complete graph. Then there is no any group of order $4$ such that whose graph is isomorphic to $\Delta$.  Because, all groups of order $4$ are abelian, and so there graph is complete.

A regular graph is a graph where all vertices  have the same number of degree.   A regular graph with vertices of degree $k$ is called a $k$-regular graph or regular graph of degree $k$  

\begin{que}
Let $G$ be a group of  order $n$, and let $\Delta=\Gamma(G)\setminus \Gamma(LC(G))$. Is it true that if $\Delta$ is a $k$-regular group, then $G$ is a solvable group?

\end{que}

\bigskip
{\it Author's Adresses:}

\medskip
Mohsen Amiri\\
Universidade Federal do Amazonas,\\
Departamento de Matem\'atica - ICE-UFAM\\  
69080-900, Manaus-AM\\
Brazil\\

\medskip
Igor Lima,\\
 \\ Universidade de Bras\'ilia,\\
70910-900 Bras\'ilia - DF\\
Brazil

\end{document}